\theoremstyle{plain}             % oder andere
\newtheorem{theorem}{Theorem}[section]
\newtheorem{lemma}[theorem]{Lemma}
\newtheorem{remark}[theorem]{Remark}
\newcommand{\mix}{\operatorname{mix}}
\newcommand{\SVD}{\operatorname{SVD}}
\newcommand{\TF}{\operatorname{TF}}
\newcommand{\TT}{\operatorname{TT}}
\newcommand{\trace}{\operatorname{trace}}
\renewcommand{\d}{\operatorname{d}\!}
\begin{document}
\title{Analysis of tensor approximation schemes for continuous functions}
\author{Michael Griebel}
\address{Michael Griebel,
Institut f\"ur Numerische Simulation, Universit\"at Bonn, Endenicher Allee 19b, 53115 Bonn,
and Fraunhofer Institute for Algorithms and Scientific Computing (SCAI), Schloss Birlinghoven, 
53754 Sankt Augustin, Germany}
\email{griebel@ins.uni-bonn.de}
\author{Helmut Harbrecht}
\address{Helmut Harbrecht,
Departement Mathematik und Informatik, Universit\"at Basel, Spiegelgasse 1, 4051 Basel, Switzerland}
\email{helmut.harbrecht@unibas.ch}
\keywords{Tensor format, approximation error, rank complexity, 
Sobolev space with dimension weights}

\begin{abstract}
In this article, we analyze tensor approximation 
schemes for continuous functions. We assume that the 
function to be approximated lies in an isotropic Sobolev 
space and discuss the cost when approximating this 
function in the continuous analogue of the Tucker tensor 
format or of the tensor train format. We especially show 
that the cost of both approximations are dimension-robust 
when the Sobolev space under consideration provides 
appropriate dimension weights.
\end{abstract}

%==========================================================
\maketitle
%==========================================================

%===================================================
\section{Introduction}
%===================================================
The efficient approximate representation of multivariate functions 
is an important task in numerical analysis and scientific computing.
In this article, we hence consider the approximation of functions 
which live on the product of $m$ bounded domains $\Omega_1\times\dots\times
\Omega_m$, each of which satisfies $\Omega_j\subset\mathbb{R}^{n_j}$. 
Besides a sparse grid approximation of the function under 
consideration, being discussed in, e.g., \cite{BG,GH13a,GH13b,Z}, 
one can also apply a low-rank approximation by means of a tensor 
approximation scheme, see, e.g., \cite{GRA,Hack,HaKu,O,OT} 
and the references therein.

The low-rank approximation in the situation of the product of $m=2$ 
domains is well understood. It is related to the \emph{singular value 
decomposition\/} and has been studied for arbitrary product domains
in, e.g., \cite{GH14,GH18}, see also \cite{T1,T2,T3} for the periodic
case. However, the situation is not that clear for the product of $m>2$ 
domains, where one ends up with \emph{tensor decompositions\/}. 
Such tensor decompositions are generalizations of the well known singular 
value decomposition and the corresponding low-rank matrix approximation 
methods of two dimensions to the higher-dimensional setting. There, 
besides the curse of dimension, we encounter -- due to the non-existence
of an Eckart-Young-Mirsky theorem -- that the concepts of singular 
value decomposition and low-rank approximation can be generalized 
to higher dimensions in more than one way. Consequently, there exist 
many generalizations of the singular value decomposition of a function and 
of low-rank approximations to tensors. To this end, various schemes have 
been developed over the years in different areas of the sciences and have 
successfully been applied to various high-dimensional problems ranging 
from quantum mechanics and physics via biology and econometrics, computer 
graphics and signal processing to numerical analysis. Recently, tensor 
methods have even been recognized as special deep neural networks in 
machine learning and big data analysis \cite{CohenShashua,KNO}. As tensor 
approximation schemes, we have, for example, matrix product states, DMRG, 
MERA, PEPS, CP, CANDECOMP, PARAFAC, Tucker, tensor train, tree tensor 
networks and hierarchical Tucker, to name a few. A mathematical introduction 
into tensor methods is given in the seminal book \cite{Hack}, while a survey 
on existing methods and their literature can be found in \cite{GKT}. Also 
various software packages have been developed for an algebra of operators 
dealing with tensors.

Tensor methods are usually analyzed as low-rank approximations to a 
full {\em discrete tensor\/} of data with respect to the $\ell_2$-norm or 
Frobenius-norm. In this respective, they can be seen as compression 
methods which may avoid the curse of dimensionality, which is inherent 
in the full tensor representation. The main tool for studying tensor 
compression schemes is the so-called {\em tensor-rank\/}, compare 
\cite{Falco1,Falco2,Hack}. Thus, instead of $\mathcal{O}(N^n)$ storage, 
as less as $\mathcal{O}(nNr^3)$ or even only $\mathcal{O}(nNr^2)$ 
storage is needed, where $N$ denotes the number of data points in one 
coordinate direction, $n$ denotes the dimension of the tensor under 
consideration and $r$ denotes the respective tensor rank of the data.
% \marginpar{genauer raussuchen, was ist HT und was TT}
The cost complexity of the various algorithms working with sparse 
tensor representations is correspondingly reduced and working in 
a sparse tensor format allows to alleviate or to completely break 
the curse of dimension for suitable tensor data classes, i.e., for
sufficiently small $r$.

However, the question where the tensor data stem from and the issue 
of the accuracy of the full tensor approximation, i.e., the discretization
error of the full tensor itself and its relation to the error of a subsequent 
low-rank tensor approximation, is usually not adequately addressed.\footnote{We 
are only aware of \cite{BD1,BD2,M16,SU}, where this question has been
considered so far.} Instead, only the approximation property of a low-rank tensor 
scheme with respect to the full tensor data is considered. But the former 
question is important since it clearly makes no sense to derive a tensor 
approximation with an error that is substantially smaller than the error 
which is already inherent in the full tensor data due to some discretization 
process for a continuous high-dimensional function which stems from some 
certain function class.

The approximation rates to \emph{continuous functions\/} can be
determined by a recursive use of the singular value decomposition, 
which is successively applied to convert the function into a specific 
continuous tensor format. We studied the singular value decomposition 
for arbitrary domains in \cite{GH14,GH18} and we now can apply these
results to discuss approximation rates of continuous tensor formats. In 
the present article, given a function $f\in H^k(\Omega_1\times\dots
\times\Omega_m)$, we study the continuous analogues of the Tucker 
tensor decomposition and of the tensor train decomposition. We give 
bounds on the ranks required to ensure that the tensor decomposition 
admits a prescribed target accuracy. Especially, our 
analysis takes into account the influence of errors induced by 
truncating infinite expansions to finite ones. We therefore study
an \emph{algorithm\/} that computes the desired tensor expansion 
which is in contrast to the question of the smallest tensor-rank.
%We like to mention that it turns out 
%that both formats do not benefit much from mixed (or anisotropic) Sobolev 
%regularity. This means that the ranks for functions 
%\[
%  f\in H_{\mix}^k(\Omega_1\times\dots\times\Omega_m) 
%  	:= H^k(\Omega_1)\otimes\dots\otimes H^k(\Omega_m).
%\]
%are in the best possible situation the same as obtained for functions 
%$f\in H^{2k}(\Omega_1\times\dots\times\Omega_m)$. 
We finally show that (isotropic) Sobolev spaces
\emph{with dimension weights\/} help to beat the curse of 
dimension when the number $m$ of product domains tends 
to infinity.

Besides the simple situation of $\Omega_1 = \dots = \Omega_m 
= [0,1]$, which is usually considered in case of tensor decompositions, 
there are many more applications of our general setting. For 
example, non-Newtonian flow can be modeled by a coupled 
system which consists of the Navier Stokes equation for the flow 
in a three-dimensional geometry described by $\Omega_1$ and of 
the Fokker-Planck equation in a $3(\ell-1)$-dimensional configuration 
space $\Omega_2\times\dots\times\Omega_{\ell}$, consisting of
$\ell-1$ spheres. Here, $\ell$ denotes the number of atoms in a 
chain-like molecule which constitutes the non-Newtonian 
behavior of the flow, for details see \cite{BKS,LL,LOZ,RG}. 
Another example is homogenization. After unfolding \cite{CDG}, 
a two-scale homogenization problem gives raise to the product of 
the macroscopic physical domain and the periodic microscopic 
domain of the cell problem, see \cite{Mat}. For multiple scales,
several periodic microscopic domains appear which reflect the 
different separable scales, see e.g.\ \cite{HS}. 
Also the $m$-th moment of linear elliptic boundary value 
problems with random source terms, i.e.~$Au(\omega)=f(\omega)$ 
in $\Omega$, are known to satisfy a deterministic partial differential 
equation on the $m$-fold product domain $\Omega\times\dots\times
\Omega$. There, the solution's $m$-th moment $\mathcal{M}_u$
is given by the equation 
\[
  (A\otimes\dots\otimes A)\mathcal{M}_u = \mathcal{M}_f\ 
  	\text{in}\ \Omega\times\dots\times\Omega,
\]
see \cite{ST1,ST3}. This approach extends to boundary value 
problems with random diffusion and to random domains as well 
\cite{CS,HSS08}. Moreover, we find the product of several domains 
in quantum mechanics for e.g.~the Schr\"odinger equation or the 
Langevin equation, where each domain is three-dimensional and 
corresponds to a single particle. Finally, we encounter it in uncertainty 
quantification, where one has the product of the physical domain 
$\Omega_1$ and of in general infinitely many intervals $\Omega_2 
= \Omega_3 = \Omega_4 = \dots$ for the random input parameter, 
which reflects its series expansion by the Karhunen-L\`oeve 
decomposition or the L\'evy-Ciesielski decomposition.

The remainder of this article is organized as follows:
In Section \ref{sec:SVD}, we give a short introduction 
to our results on the singular value decomposition, which 
are needed to derive the estimates for the continuous tensor
decompositions. Then, in Section \ref{sec:TF}, we study 
the continuous Tucker tensor format, computed by means of 
the higher-oder singular value decomposition. Next, we study 
the continuous tensor train decomposition in Section \ref{sec:TT},
computed by means of a repeated use of a vector-valued 
singular value decomposition. Finally, Section\ \ref{sec:conrem} 
concludes with some final remarks.

Throughout this article, to avoid the 
repeated use of generic but unspecified constants, 
we denote by $C \lesssim D$ that $C$ is bounded 
by a multiple of $D$ independently of parameters 
which $C$ and $D$ may depend on. Obviously, 
$C \gtrsim D$ is defined as $D \lesssim C$, and 
$C \sim D$ as $C \lesssim D$ and $C \gtrsim D$.
Moreover, given a Lipschitz-smooth domain 
$\Omega\subset\mathbb{R}^n$, $L^2(\Omega)$ means the 
space of square integrable functions on $\Omega$. For real 
numbers $k\ge 0$, the associated Sobolev space is denoted by
$H^k(\Omega)$, where its norm $\|\cdot\|_{H^k(\Omega)}$ 
is defined in the standard way, compare \cite{MCL,STEIN}. 
As usual, we have $H^0(\Omega) = L^2(\Omega)$. The 
seminorm in $H^k(\Omega)$ is denoted by $|\cdot|_{H^k(\Omega_1)}$. 
Although not explicitly written, our subsequent analysis covers also the 
situation of $\Omega$ being not a domain but a (smooth) 
manifold.

%===================================================
\section{Singular value decomposition}\label{sec:SVD}
%===================================================
\subsection{Definition and calculation}
%===================================================
Let $\Omega_1\subset\mathbb{R}^{n_1}$ and $\Omega_2$
be Lipschitz-smooth domains. To represent functions 
$f\in L^2(\Omega_1\times\Omega_2)$ on the tensor 
product domain $\Omega_1\times\Omega_2$ in an efficient 
way, we will consider low-rank approximations which separate the 
variables $\boldsymbol{x}\in\Omega_1$ and $\boldsymbol{y}\in\Omega_2$ in 
accordance with
\begin{equation}\label{eq:low rank}
%============================================
   f(\boldsymbol{x},\boldsymbol{y})\approx f_r(\boldsymbol{x},\boldsymbol{y}) 
   	:= \sum_{\alpha=1}^r \sqrt{\lambda(\alpha)}\varphi(\boldsymbol{x},\alpha)\psi(\alpha,\boldsymbol{y}).
\end{equation}
It is well known (see e.g.~\cite{L,SCH,Simsa}) that the best 
possible representation \eqref{eq:low rank} in the $L^2$-sense is 
given by the singular value decomposition, also called Karhunen-L\`oeve 
expansion.\footnote{We refer the reader to \cite{ST} for a comprehensive 
historical overview on the singular value decomposition.} Then, the 
coefficients $\sqrt{\lambda(\alpha)}\in\mathbb{R}$ are the singular values 
and the $\varphi(\alpha)\in L^2(\Omega_1)$ and $\psi(\alpha)\in L^2(\Omega_2)$ 
are the left and right ($L^2$-normalized) eigenfunctions of the integral operator
\[
  \mathcal{S}_f:L^2(\Omega_1)\to L^2(\Omega_2),
  \quad u\mapsto (\mathcal{S}_fu)(\boldsymbol{y}) := 
  	\int_{\Omega_1} f(\boldsymbol{x},\boldsymbol{y})u(\boldsymbol{x})\d\boldsymbol{x}.
\]
This means that
\begin{equation}\label{eq:LREV}
%===================================
  \sqrt{\lambda(\alpha)}\psi(\alpha,\boldsymbol{y}) 
  	= \big(\mathcal{S}_f\varphi(\alpha)\big)(\boldsymbol{y})
		\quad\text{and}\quad
  \sqrt{\lambda(\alpha)}\varphi(\boldsymbol{x},\alpha) 
  	= \big(\mathcal{S}_f^\star\psi(\alpha)\big)(\boldsymbol{x}),
\end{equation}
where
\[
  \mathcal{S}_f^\star:L^2(\Omega_2)\to L^2(\Omega_1),
  \quad v\mapsto (\mathcal{S}_f^\star v)(\boldsymbol{x}) 
  := \int_{\Omega_2} f(\boldsymbol{x},\boldsymbol{y})v(\boldsymbol{y})\d\boldsymbol{y}.
\]
is the adjoint of $\mathcal{S}_f$. Especially, the left and right eigenfunctions
$\{\varphi(\alpha)\}_{\alpha=1}^\infty$ and $\{\psi(\alpha)\}_{\alpha=1}^\infty$ 
form orthonormal bases in $L^2(\Omega_1)$ and $L^2(\Omega_2)$, respectively.

In order to compute the singular value decomposition, we need to
solve the eigenvalue problem 
\[
  \mathcal{K}_f\varphi(\alpha) = \lambda(\alpha)\varphi(\alpha)
\]
for the integral operator
\begin{equation}\label{kern}
%=======================================================
  \mathcal{K}_f=\mathcal{S}_f^\star\mathcal{S}_f:L^2(\Omega_1)\to L^2(\Omega_1),\quad 
  	u\mapsto (\mathcal{K}_fu)(\boldsymbol{x}) := \int_{\Omega_1} k_f(\boldsymbol{x},\boldsymbol{x}')u(\boldsymbol{x}')\d\boldsymbol{x}'.
\end{equation}
Since $f\in L^2(\Omega_1\times\Omega_2)$, the kernel
\begin{equation}	\label{eq:kernel}
%=======================================================
  k_f(\boldsymbol{x},\boldsymbol{x}') = \int_{\Omega_2} 
  	f(\boldsymbol{x},\boldsymbol{y}) f(\boldsymbol{x}',\boldsymbol{y}) \d\boldsymbol{y} 	
  		\in L^2(\Omega_1\times\Omega_1).
\end{equation}
is a symmetric Hilbert-Schmidt kernel. Hence, there exist 
countably many eigenvalues
\[
\lambda(1)\ge\lambda(2)\ge\cdots\ge\lambda(m)\overset{m\to\infty}{\longrightarrow} 0
\]
and the associated eigenfunctions $\{\varphi(\alpha)\}_{\alpha\in\mathbb{N}}$ 
constitute an orthonormal basis in $L^2(\Omega_1)$. 

Likewise, to obtain an orthonormal basis of
$L^2(\Omega_2)$, we can solve the eigenvalue problem 
\[
  \widetilde{\mathcal{K}}_f\psi(\alpha) = \widetilde{\lambda}(\alpha)\psi(\alpha)
\]
for the integral operator
\[
  \widetilde{\mathcal{K}}_f=\mathcal{S}_f\mathcal{S}_f^\star:
  	L^2(\Omega_2)\to L^2(\Omega_2),\quad 
		u\mapsto (\widetilde{\mathcal{K}}_fu)(\boldsymbol{y}) := \int_{\Omega_2} 
			\widetilde{k}_f(\boldsymbol{y},\boldsymbol{y}')u(\boldsymbol{y}')\d\boldsymbol{y}'
\]
with symmetric Hilbert-Schmidt kernel
\begin{equation}\label{eq:tildekernel}
%=======================================================
  \widetilde{k}_f(\boldsymbol{y},\boldsymbol{y}') = \int_{\Omega_1} 
  	f(\boldsymbol{x},\boldsymbol{y}) f(\boldsymbol{x},\boldsymbol{y}') \d\boldsymbol{x} 	
  		\in L^2(\Omega_2\times\Omega_2).
\end{equation}
It holds $\lambda(\alpha) = \widetilde{\lambda}(\alpha)$ and the 
sequences $\{\varphi(\alpha)\}$ and $\{\psi(\alpha)\}$ are related by
\eqref{eq:LREV}.

%===================================================
\subsection{Regularity of the eigenfunctions}\label{subsec:regularity}
%===================================================
Now, we consider functions $f\in H^k(\Omega_1\times\Omega_2)$.
In the following, we collect results from \cite{GH14,GH18} concerning 
the singular value decomposition of such functions. We repeat the proof 
whenever needed for having explicit constants. To this end, we define the 
mixed Sobolev space $H_{\mix}^{k,\ell}(\Omega_1\times\Omega_2)$ as
a tensor product of Hilbert spaces
\[
  H_{\mix}^{k,\ell}(\Omega_1\times\Omega_2)
  	:= H^k(\Omega_1)\otimes H^\ell(\Omega_2),
\]
which we equip with the usual cross norm
\[
  \|f\|_{H_{\mix}^{k,\ell}(\Omega_1\times\Omega_2)}
  	:= \sqrt{\sum_{|\boldsymbol\alpha|\le k}\sum_{|\boldsymbol\beta|\le\ell}
		\bigg\|\frac{\partial^{|\boldsymbol\alpha|}\partial^{|\boldsymbol\beta|}}
			{\partial{\bf x}^{\boldsymbol\alpha}\partial{\bf y}^{\boldsymbol\beta}}
			f\bigg\|_{L^2(\Omega_1\times\Omega_2)}^2}.
\]
Note that
\[
  H^k(\Omega_1\times\Omega_2)\subset
  H_{\mix}^{k,0}(\Omega_1\times\Omega_2),\quad
  H^k(\Omega_1\times\Omega_2)\subset
  H_{\mix}^{0,k}(\Omega_1\times\Omega_2).
\]

\begin{lemma}		\label{lem:continuity}
%=======================================
Assume that $f\in H^k(\Omega_1\times\Omega_2)$
for some fixed $k\ge 0$. Then, the operators
\[
  \mathcal{S}_f:L^2(\Omega_1)\to H^k(\Omega_2),\quad
  \mathcal{S}_f^\star:L^2(\Omega_2)\to H^k(\Omega_1)
\]
are continuous with 
\[
\big\|\mathcal{S}_f\big\|_{L^2(\Omega_1)\to H^k(\Omega_2)}
\le\|f\|_{H_{\mix}^{0,k}(\Omega_1\times\Omega_2)},\quad
\big\|\mathcal{S}_f^\star\big\|_{L^2(\Omega_2)\to H^k(\Omega_1)}
\le\|f\|_{H_{\mix}^{k,0}(\Omega_1\times\Omega_2)}.
\]
\end{lemma}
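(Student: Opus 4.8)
The plan is to bound the operator norm of $\mathcal{S}$ by estimating the $H^k(\Omega_2)$-norm of $\mathcal{S}u$ for an arbitrary $u \in L^2(\Omega_1)$ directly from the integral definition of $\mathcal{S}$. Recall that $(\mathcal{S}u)(\boldsymbol{y}) = \int_{\Omega_1} f(\boldsymbol{x},\boldsymbol{y}) u(\boldsymbol{x}) \, \d\boldsymbol{x}$. The key observation is that differentiation in the $\boldsymbol{y}$-variable commutes with integration against $u$ in the $\boldsymbol{x}$-variable, so that for any multi-index $\boldsymbol{\beta}$ with $|\boldsymbol{\beta}| \le k$ acting on the $\boldsymbol{y}$-coordinates we have
\[
  \partial_{\boldsymbol{y}}^{\boldsymbol{\beta}} (\mathcal{S}u)(\boldsymbol{y})
  = \int_{\Omega_1} \big(\partial_{\boldsymbol{y}}^{\boldsymbol{\beta}} f\big)(\boldsymbol{x},\boldsymbol{y}) \, u(\boldsymbol{x}) \, \d\boldsymbol{x}.
\]
First I would justify this differentiation under the integral sign, which is where the membership $f \in H_{\mix}^{0,k}(\Omega_1\times\Omega_2) = L^2(\Omega_1)\otimes H^k(\Omega_2)$ enters essentially rather than merely $f \in H^k(\Omega_1\times\Omega_2)$: the mixed regularity guarantees that the weak $\boldsymbol{y}$-derivatives of $f$ up to order $k$ lie in $L^2(\Omega_1\times\Omega_2)$, so the integrals on the right-hand side are well-defined and the identity holds in the weak sense.

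Next I would apply the Cauchy--Schwarz inequality on $\Omega_1$ to each derivative. For fixed $\boldsymbol{y}$ this gives
\[
  \Big| \partial_{\boldsymbol{y}}^{\boldsymbol{\beta}} (\mathcal{S}u)(\boldsymbol{y}) \Big|^2
  \le \Big(\int_{\Omega_1} \big|\partial_{\boldsymbol{y}}^{\boldsymbol{\beta}} f(\boldsymbol{x},\boldsymbol{y})\big|^2 \d\boldsymbol{x}\Big) \, \|u\|_{L^2(\Omega_1)}^2.
\]
Integrating this over $\boldsymbol{y}\in\Omega_2$ and summing over all multi-indices $\boldsymbol{\beta}$ with $|\boldsymbol{\beta}|\le k$ then reconstructs precisely the $H^k(\Omega_2)$-seminorms of $\mathcal{S}u$ on the left and the $H_{\mix}^{0,k}(\Omega_1\times\Omega_2)$-norm of $f$ on the right, yielding
\[
  \|\mathcal{S}u\|_{H^k(\Omega_2)}^2
  \le \|f\|_{H_{\mix}^{0,k}(\Omega_1\times\Omega_2)}^2 \, \|u\|_{L^2(\Omega_1)}^2,
\]
which is the claimed bound on $\|\mathcal{S}\|_{L^2(\Omega_1)\to H^k(\Omega_2)}$. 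The estimate for $\mathcal{S}^\star$ follows by the identical argument with the roles of $\Omega_1$ and $\Omega_2$ interchanged, using $f \in H_{\mix}^{k,0}(\Omega_1\times\Omega_2)$ and differentiating in $\boldsymbol{x}$.

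The main obstacle I anticipate is the rigorous justification of interchanging the weak $\boldsymbol{y}$-derivatives with the $\boldsymbol{x}$-integration, together with the measurability and integrability bookkeeping needed to make the pointwise-in-$\boldsymbol{y}$ Cauchy--Schwarz step valid for almost every $\boldsymbol{y}$ and to guarantee that $\mathcal{S}u$ genuinely possesses weak derivatives up to order $k$. This is most cleanly handled by first establishing the identity and the bound for the dense subclass of smooth functions $f$, where Fubini's theorem and classical differentiation under the integral sign apply without difficulty, and then passing to the limit using the continuity of both sides in the respective mixed-norm topology. Everything after this density argument is a routine application of Cauchy--Schwarz and the definition of the mixed Sobolev norm.
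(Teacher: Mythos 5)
Your proof is correct, but it takes a genuinely different route from the paper's. You differentiate under the integral sign and apply Cauchy--Schwarz in $\boldsymbol{x}$: writing $\partial_{\boldsymbol{y}}^{\boldsymbol\beta}(\mathcal{S}u)$ as the integral operator with kernel $\partial_{\boldsymbol{y}}^{\boldsymbol\beta}f$ applied to $u$, you obtain $\|\partial_{\boldsymbol{y}}^{\boldsymbol\beta}(\mathcal{S}u)\|_{L^2(\Omega_2)}\le\|\partial_{\boldsymbol{y}}^{\boldsymbol\beta}f\|_{L^2(\Omega_1\times\Omega_2)}\|u\|_{L^2(\Omega_1)}$, and summing over $|\boldsymbol\beta|\le k$ gives exactly the stated constant. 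The interchange of weak derivative and $\boldsymbol{x}$-integration that you flag as the main obstacle can in fact be handled directly, without any density argument, by testing against $\phi\in C_c^\infty(\Omega_2)$ and applying Fubini twice, using only $\partial_{\boldsymbol{y}}^{\boldsymbol\beta}f\in L^2(\Omega_1\times\Omega_2)$; your smooth-approximation-and-limit route works as well. The paper argues instead by duality: it writes $\|\mathcal{S}u\|_{H^k(\Omega_2)}=\sup_{\|v\|_{H^{-k}(\Omega_2)}=1}(\mathcal{S}u,v)_{L^2(\Omega_2)}$, identifies $(\mathcal{S}u,v)_{L^2(\Omega_2)}=(f,u\otimes v)_{L^2(\Omega_1\times\Omega_2)}$, and bounds this pairing by $\|f\|_{H_{\mix}^{0,k}(\Omega_1\times\Omega_2)}\|u\otimes v\|_{H_{\mix}^{0,-k}(\Omega_1\times\Omega_2)}=\|f\|_{H_{\mix}^{0,k}(\Omega_1\times\Omega_2)}\|u\|_{L^2(\Omega_1)}$. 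The trade-off is this: your argument is more elementary and self-contained (no negative-order Sobolev spaces, no duality pairing on mixed tensor-product spaces), but it is tied to the derivative-based definition of the $H^k$-norm and hence to integer $k$; the paper's pairing argument never differentiates anything and therefore carries over verbatim to fractional smoothness $k$, which the paper implicitly permits (note the polynomial degree $\lceil k\rceil$ used in Section 2.3), at the price of invoking the $H^k$/$H^{-k}$ duality machinery. Both arguments yield the same constant, so either serves the later estimates equally well.
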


\begin{proof}
From $H^k(\Omega_1\times \Omega_2)\subset
H_{\mix}^{0,k}(\Omega_1\times \Omega_2)$ it 
follows for $f\in H^k(\Omega_1\times \Omega_2)$ that 
$f\in H_{\mix}^{0,k}(\Omega_1\times \Omega_2)$. 
Therefore, the operator $\mathcal{S}_f:L^2(\Omega_1)
\to H^k(\Omega_2)$ is continuous since
\begin{align*}
  \big\|\mathcal{S}_fu\big\|_{H^k(\Omega_2)}
  	&= \sup_{\|v\|_{H^{-k}(\Omega_2)}=1}
		(\mathcal{S}_fu,v)_{L_2(\Omega_2)}\\
	&= \sup_{\|v\|_{H^{-k}(\Omega_2)}=1}(f,u\otimes v)_{L^2(\Omega_1\times\Omega_2)}\\
	&\le\sup_{\|v\|_{H^{-k}(\Omega_2)}=1}\|f\|_{H_{\mix}^{0,k}(\Omega_1\times\Omega_2)}
		\|u\otimes v\|_{H_{\mix}^{0,-k}(\Omega_1\times\Omega_2)}\\
	&\le\|f\|_{H_{\mix}^{0,k}(\Omega_1\times\Omega_2)}\|u\|_{L^2(\Omega_1)}.
\end{align*}
Note that we have used here
$H_{\mix}^{0,-k}(\Omega_1\times\Omega_2) =
L^2(\Omega_1)\otimes H^{-k}(\Omega_2)$.
Proceeding likewise for $\mathcal{S}_f^\star:
L^2(\Omega_2)\to H^k(\Omega_1)$ completes the
proof.
\end{proof}

\begin{lemma}\label{lem:regularity1}
%=======================================
Assume that $f\in H^k(\Omega_1\times\Omega_2)$ for some fixed 
$k\ge 0$. Then, it holds $\mathcal{S}_f\varphi(\alpha)\in H^k(\Omega_2)$ and
$\mathcal{S}_f^\star\psi(\alpha)\in H^k(\Omega_1)$ for all $\alpha\in\mathbb{N}$
with
\[
 \|\varphi(\alpha)\|_{H^k(\Omega_1)}\le\frac{1}{\sqrt{\lambda(\alpha)}}
 	\|f\|_{H_{\mix}^{k,0}(\Omega_1\times\Omega_2)},
 \quad
 \|\psi(\alpha)\|_{H^k(\Omega_2)}\le\frac{1}{\sqrt{\lambda(\alpha)}}
 	\|f\|_{H_{\mix}^{0,k}(\Omega_1\times\Omega_2)}.
\]
\end{lemma}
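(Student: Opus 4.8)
The plan is to read off both the regularity assertion and the norm bounds directly from the eigenfunction relations in \eqref{eq:LREV}, combined with the continuity estimates of Lemma \ref{lem:continuity}. The key observation is that each eigenfunction is, up to the scalar factor $\sqrt{\lambda(\alpha)}$, the image of its complementary partner under $\mathcal{S}$ or $\mathcal{S}^\star$. Since these operators smooth from $L^2$ into $H^k$, the claimed $H^k$-regularity is inherited automatically, and the factor $1/\sqrt{\lambda(\alpha)}$ is precisely the price of undoing the scaling.

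Concretely, I would start from the second identity in \eqref{eq:LREV}, namely $\sqrt{\lambda(\alpha)}\,\varphi(\alpha) = \mathcal{S}^\star\psi(\alpha)$. Because $\psi(\alpha)\in L^2(\Omega_2)$ and $\mathcal{S}^\star:L^2(\Omega_2)\to H^k(\Omega_1)$ is continuous by Lemma \ref{lem:continuity}, the right-hand side lies in $H^k(\Omega_1)$, which already yields $\varphi(\alpha)\in H^k(\Omega_1)$ for every index with $\lambda(\alpha)>0$. Taking $H^k(\Omega_1)$-norms and invoking the operator-norm bound from Lemma \ref{lem:continuity} gives
\[
  \sqrt{\lambda(\alpha)}\,\|\varphi(\alpha)\|_{H^k(\Omega_1)}
  = \|\mathcal{S}^\star\psi(\alpha)\|_{H^k(\Omega_1)}
  \le \|f\|_{H_{\mix}^{k,0}(\Omega_1\times\Omega_2)}\,\|\psi(\alpha)\|_{L^2(\Omega_2)}.
\]
Here I would use that the right eigenfunctions are $L^2$-normalized, so that $\|\psi(\alpha)\|_{L^2(\Omega_2)}=1$; dividing by $\sqrt{\lambda(\alpha)}$ then produces the first estimate. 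The bound on $\|\psi(\alpha)\|_{H^k(\Omega_2)}$ follows symmetrically, starting instead from $\sqrt{\lambda(\alpha)}\,\psi(\alpha)=\mathcal{S}\varphi(\alpha)$, applying the continuity of $\mathcal{S}:L^2(\Omega_1)\to H^k(\Omega_2)$, and using $\|\varphi(\alpha)\|_{L^2(\Omega_1)}=1$.

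I do not expect a genuine obstacle here: the statement is essentially a corollary of Lemma \ref{lem:continuity} once the eigenfunctions are rewritten through \eqref{eq:LREV}. The only points demanding a moment's care are the bookkeeping of which mixed norm ($H_{\mix}^{k,0}$ versus $H_{\mix}^{0,k}$) pairs with which operator, and the implicit restriction to indices $\alpha$ with $\lambda(\alpha)>0$, which is needed for the division by $\sqrt{\lambda(\alpha)}$ to be legitimate.
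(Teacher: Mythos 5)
Your proof is correct and takes essentially the same approach as the paper: rewrite the eigenfunctions via \eqref{eq:LREV}, apply the continuity bounds of Lemma~\ref{lem:continuity}, and use the $L^2$-normalization $\|\psi(\alpha)\|_{L^2(\Omega_2)}=\|\varphi(\alpha)\|_{L^2(\Omega_1)}=1$. The paper disposes of the second estimate with the phrase ``by duality,'' which is precisely your symmetric argument with $\mathcal{S}$ in place of $\mathcal{S}^\star$; your explicit remark about restricting to indices with $\lambda(\alpha)>0$ is a sound point of care that the paper leaves implicit.
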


\begin{proof}
According to \eqref{eq:LREV} and Lemma~\ref{lem:continuity}, 
we have
\[
\|\varphi(\alpha)\|_{H^k(\Omega_1)} = \frac{1}{\sqrt{\lambda(\alpha)}}
	\big\|\mathcal{S}_f^\star\psi(\alpha)\big\|_{H^k(\Omega_1)}\le\frac{1}{\sqrt{\lambda(\alpha)}}
	\|f\|_{H_{\mix}^{k,0}(\Omega_1\times\Omega_2)}\|\psi(\alpha)\|_{L^2(\Omega_2)}.
\]
This proves the first assertion. The second assertion follows
by duality. 
\end{proof}

As an immediate consequence of Lemma~\ref{lem:regularity1}, 
we obtain 
\[
  %\Bigg\|\sum_{\alpha=1}^r\sqrt{\lambda(\alpha)}\varphi(\alpha)\Bigg\|_{H^k(\Omega_1)}\le 
	\sum_{\alpha=1}^r\lambda(\alpha)\|\varphi(\alpha)\|_{H^k(\Omega_1)}^2
	\le r\|f\|_{H_{\mix}^{k,0}(\Omega_1\times\Omega_2)}^2
\]
and
\[	
	\sum_{\alpha=1}^r\lambda(\alpha)\|\psi(\alpha)\|_{H^k(\Omega_2)}^2
	\le r\|f\|_{H_{\mix}^{0,k}(\Omega_1\times\Omega_2)}^2.
\]
We will show later in Lemma~\ref{lem:regularity2} how to 
improve this estimate by sacrificing some regularity.

%===================================================
\subsection{Truncation error}\label{subsec:error}
%===================================================
We next give estimates on the decay rate of the 
eigenvalues of the integral operator $\mathcal{K}_f = 
\mathcal{S}_f^\star\mathcal{S}_f$ with kernel \eqref{eq:kernel}. 
To this end, we exploit the smoothness in the function's 
first variable and assume hence $f\in H_{\mix}^{k,0}(\Omega_1
\times\Omega_2)$. We introduce finite element spaces $U_r\subset 
L^2(\Omega_1)$, which consist of $r$ discontinuous, piecewise 
polynomial functions of total degree $\lceil k\rceil$ on a quasi-uniform 
triangulation of $\Omega_1$ with mesh width $h_r\sim r^{-1/n_1}$. 
Then, given a function \(w\in H^k(\Omega_1)\), the $L^2$-orthogonal 
projection $P_r:L^2(\Omega_1)\to U_r$ satisfies
\begin{equation}\label{eq:brambleh}
%===================================================
  \|(I-P_r)w \|_{L^2(\Omega_1)}
  	\le c_k r^{-k/n_1}|w|_{H^k(\Omega_1)}
\end{equation}
uniformly in \(r\) due to the Bramble-Hilbert lemma,
see e.g., \cite{B,Brenner}.

For the approximation of \(f(\boldsymbol{x},\boldsymbol{y})\) in the 
first variable, i.e. $\big((P_r\otimes I)f\big)(\boldsymbol{x},\boldsymbol{y})$,
we obtain the following approximation result for the present choice 
of \(U_r\), see \cite{GH18} for the proof.

\begin{lemma}\label{thm:optrerror}
%===================================================
Assume that $f\in H^k(\Omega_1\times\Omega_2)$ for some 
fixed $k\ge 0$. Let \(\lambda(1)\geq\lambda(2)\geq\ldots\geq 0\) be the 
eigenvalues of the operator \(\mathcal{K}_f=\mathcal{S}_f^\star\mathcal{S}_f\) 
and \(\lambda_r(1)\ge\lambda_r(2)\ge\ldots\ge\lambda_r(r)\geq 0\) 
those of \(\mathcal{K}_f^{\,r}:= P_r\mathcal{K}_fP_r\). Then, it holds 
\[
  \|f-(P_r\otimes I) f\|_{L^2(\Omega_1\times\Omega_2)}^2
  	= \trace\mathcal{K}_f-\trace\mathcal{K}_{f}^{\,r}
	= \sum_{\alpha=1}^{r}\big(\lambda(\alpha)-\lambda_r(\alpha)\big)
		+ \sum_{\alpha=r+1}^\infty\lambda(\alpha).
\]
\end{lemma}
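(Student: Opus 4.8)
The plan is to reduce both claimed equalities to two trace computations together with one orthogonality argument, since everything here is really a statement about the Hilbert--Schmidt operator $\mathcal{S}$ and its compression $\mathcal{S}P_r$.

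First I would record that $\trace\mathcal{K}=\|f\|_{L^2(\Omega_1\times\Omega_2)}^2$. This is immediate from $\mathcal{K}=\mathcal{S}^\star\mathcal{S}$ and the fact that $\mathcal{S}$ is Hilbert--Schmidt with kernel $f$, so that $\trace\mathcal{K}=\|\mathcal{S}\|_{\mathrm{HS}}^2=\|f\|_{L^2(\Omega_1\times\Omega_2)}^2$; alternatively one integrates the diagonal of the kernel \eqref{eq:kernel}, namely $\trace\mathcal{K}=\int_{\Omega_1}k(\boldsymbol{x},\boldsymbol{x})\d\boldsymbol{x}=\int_{\Omega_1}\int_{\Omega_2}f(\boldsymbol{x},\boldsymbol{y})^2\d\boldsymbol{y}\,\d\boldsymbol{x}$. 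In particular $\mathcal{K}$ is trace class, so all the eigenvalue sums below converge absolutely.

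Next I would exploit that $P_r$ is an $L^2(\Omega_1)$-orthogonal projection, hence so is $(I-P_r)\otimes I$ on $L^2(\Omega_1\times\Omega_2)$. Writing $f-(P_r\otimes I)f=\big((I-P_r)\otimes I\big)f$ and applying Pythagoras therefore gives
\[
  \|f-(P_r\otimes I)f\|_{L^2(\Omega_1\times\Omega_2)}^2
  	= \|f\|_{L^2(\Omega_1\times\Omega_2)}^2-\|(P_r\otimes I)f\|_{L^2(\Omega_1\times\Omega_2)}^2.
\]
It then remains to identify the last term with $\trace\mathcal{K}_r$. For this I would fix an $L^2(\Omega_1)$-orthonormal basis $\{e_i\}_{i=1}^r$ of $U_r$ and observe that $(P_r\otimes I)f=\sum_{i=1}^r e_i\otimes\mathcal{S}e_i$; orthonormality of the $e_i$ then yields $\|(P_r\otimes I)f\|^2=\sum_{i=1}^r\|\mathcal{S}e_i\|_{L^2(\Omega_2)}^2=\sum_{i=1}^r(\mathcal{K}e_i,e_i)_{L^2(\Omega_1)}$. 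Since $P_r$ is self-adjoint, $\mathcal{K}_r=P_r\mathcal{K}P_r=(\mathcal{S}P_r)^\star(\mathcal{S}P_r)$, and because $P_re_i=e_i$ while $P_r$ annihilates $U_r^\perp$, this same sum equals $\trace\mathcal{K}_r=\|\mathcal{S}P_r\|_{\mathrm{HS}}^2$. Combining with the previous display establishes the first equality.

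For the second equality I would simply expand each trace over the respective eigenbasis: $\trace\mathcal{K}=\sum_{\alpha=1}^\infty\lambda(\alpha)$, whereas $\mathcal{K}_r$ has range contained in $U_r$ and hence rank at most $r$, so its only possibly nonzero eigenvalues are $\lambda_r(1),\dots,\lambda_r(r)$ and $\trace\mathcal{K}_r=\sum_{\alpha=1}^r\lambda_r(\alpha)$. Subtracting and splitting off the first $r$ terms of the series for $\trace\mathcal{K}$ then produces exactly $\sum_{\alpha=1}^r\big(\lambda(\alpha)-\lambda_r(\alpha)\big)+\sum_{\alpha=r+1}^\infty\lambda(\alpha)$. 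I expect the only step requiring genuine care to be the identity $\|(P_r\otimes I)f\|^2=\trace\mathcal{K}_r$; once the compression is written as $\mathcal{K}_r=(\mathcal{S}P_r)^\star(\mathcal{S}P_r)$, everything else is bookkeeping with orthonormal bases and the convergence guaranteed by $\mathcal{K}$ being trace class.
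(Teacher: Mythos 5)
Your proof is correct. Note that the paper itself does not prove this lemma but defers it to the reference \cite{GH18}; the argument given there is along the same lines as yours, namely Pythagoras for the orthogonal projection $P_r\otimes I$ combined with the Hilbert--Schmidt trace identities $\|f\|_{L^2(\Omega_1\times\Omega_2)}^2=\trace\mathcal{K}$ and $\|(P_r\otimes I)f\|_{L^2(\Omega_1\times\Omega_2)}^2=\trace\mathcal{K}_r$. Your write-up is complete and self-contained: the key identity $(P_r\otimes I)f=\sum_{i=1}^r e_i\otimes\mathcal{S}e_i$, the identification $\mathcal{K}_r=(\mathcal{S}P_r)^\star(\mathcal{S}P_r)$ with $\trace\mathcal{K}_r=\sum_{i=1}^r(\mathcal{K}e_i,e_i)_{L^2(\Omega_1)}$, and the rank-$r$ observation that reduces $\trace\mathcal{K}_r$ to the finite sum $\sum_{\alpha=1}^r\lambda_r(\alpha)$ are exactly the ingredients needed, and the convergence issues are settled by your remark that $\mathcal{K}$ is trace class.
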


By combining this lemma with the approximation estimate 
\eqref{eq:brambleh} and in view of $\lambda(\alpha)-\lambda_r(\alpha)
\ge 0$ for all $\alpha\in\{1,2,\ldots,r\}$ according to the min-max 
theorem of Courant-Fischer, see \cite{BO} for example, we 
conclude that the truncation error of the singular value 
decomposition can be bounded by
\begin{align*}
  \Bigg\|f-\sum_{\alpha=1}^r\sqrt{\lambda(\alpha)}
  	\big(\varphi(\alpha)\otimes\psi(\alpha)\big)\Bigg\|_{L^2(\Omega_1\times\Omega_2)}\!\!\!
  &= \sqrt{\sum_{\alpha=r+1}^\infty\lambda(\alpha)}
  %&\le\|f-(P_r\otimes I)f\|_{L^2(\Omega_1\times\Omega_2)}^2\\
  \le c_k r^{-\frac{k}{n_1}}|f|_{H_{\mix}^{k,0}(\Omega_1\times\Omega_2)}.
\end{align*}

Since the eigenvalues of the integral operator $\mathcal{K}_f$ 
and its adjoint $\widetilde{\mathcal{K}}_f$ are the same, we 
can also exploit the smoothness of $f$ in the second coordinate
by interchanging the roles of $\Omega_1$ and $\Omega_2$ in 
the above considerations. We thus obtain the following theorem:

\begin{theorem}\label{coro:truncatedSVD}
%=======================================
Let $f\in H^k(\Omega_1\times\Omega_2)$ for some fixed $k\ge 0$
and let
\[
  f_r^{\SVD} = \sum_{\alpha=1}^r\sqrt{\lambda(\alpha)}
  	\big(\varphi(\alpha)\otimes\psi(\alpha)\big).
\]
Then, it holds
\begin{equation}\label{eq:truncation}
%===================================
  \|f-f_r^{\SVD}\|_{L^2(\Omega_1\times\Omega_2)}
  = \sqrt{\sum_{\alpha=r+1}^\infty\lambda(\alpha)}
		\le c_k r^{-\frac{k}{\min\{n_1,n_2\}}}
			|f|_{H^k(\Omega_1\times\Omega_2)}.
\end{equation}
\end{theorem}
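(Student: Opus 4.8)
The plan is to obtain the theorem by combining the estimate derived immediately before its statement---which already yields the bound with exponent $-k/n_1$---with the symmetric estimate obtained by interchanging the two factors, and then to pass from the two mixed seminorms to the single full $H^k$-seminorm. First I would record the identity $\|f-f_r^{\SVD}\|_{L^2(\Omega_1\times\Omega_2)}^2=\sum_{\alpha=r+1}^\infty\lambda(\alpha)$. This is immediate from the singular value decomposition: since $\{\varphi(\alpha)\}$ and $\{\psi(\alpha)\}$ are orthonormal in $L^2(\Omega_1)$ and $L^2(\Omega_2)$, the tensor products $\{\varphi(\alpha)\otimes\psi(\beta)\}$ are orthonormal in $L^2(\Omega_1\times\Omega_2)$, so Parseval gives $\|f\|_{L^2}^2=\sum_\alpha\lambda(\alpha)$ and subtracting the first $r$ terms leaves exactly the tail. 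This is the identity already exploited in the preceding paragraph through Lemma~\ref{thm:optrerror}.

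Second, the display preceding the theorem provides $\sqrt{\sum_{\alpha>r}\lambda(\alpha)}\le c_k\,r^{-k/n_1}\,|f|_{H_{\mix}^{k,0}(\Omega_1\times\Omega_2)}$, obtained by feeding the Bramble--Hilbert bound \eqref{eq:brambleh} for the projection $P_r$ on $\Omega_1$ into Lemma~\ref{thm:optrerror}. Because the eigenvalues of $\mathcal{K}=\mathcal{S}^\star\mathcal{S}$ with kernel \eqref{eq:kernel} and of $\widetilde{\mathcal{K}}=\mathcal{S}\mathcal{S}^\star$ with kernel \eqref{eq:tildekernel} coincide, i.e. $\lambda(\alpha)=\widetilde{\lambda}(\alpha)$, I would run the identical argument with the roles of $\Omega_1$ and $\Omega_2$ exchanged: replace $U_r\subset L^2(\Omega_1)$ by an analogous finite element space in $L^2(\Omega_2)$ of mesh width $h_r\sim r^{-1/n_2}$, apply \eqref{eq:brambleh} in the second variable, and invoke Lemma~\ref{thm:optrerror} for $P_r$ acting on $\Omega_2$. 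This produces the companion estimate $\sqrt{\sum_{\alpha>r}\lambda(\alpha)}\le c_k\,r^{-k/n_2}\,|f|_{H_{\mix}^{0,k}(\Omega_1\times\Omega_2)}$, where the tail is literally the same sum since the two operators share their spectrum.

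Finally, I would use the continuous embeddings $H^k(\Omega_1\times\Omega_2)\subset H_{\mix}^{k,0}(\Omega_1\times\Omega_2)$ and $H^k(\Omega_1\times\Omega_2)\subset H_{\mix}^{0,k}(\Omega_1\times\Omega_2)$ recorded before Lemma~\ref{lem:continuity} to bound both mixed seminorms by $|f|_{H^k(\Omega_1\times\Omega_2)}$, the point being that the order-$k$ derivatives acting on a single factor form a subset of all order-$k$ derivatives of $f$. Keeping whichever of the two exponents $-k/n_1$, $-k/n_2$ has the larger denominator then yields the rate $r^{-k/\min\{n_1,n_2\}}$ and completes the argument. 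I do not expect a genuine obstacle: the entire content is the symmetry between the two variables together with the spectral identity $\lambda(\alpha)=\widetilde{\lambda}(\alpha)$, and the only point needing care is that $c_k$ and the embedding constants are independent of $r$, which holds because \eqref{eq:brambleh} is uniform in $r$.
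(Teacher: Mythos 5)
Your proposal is correct and follows essentially the same route as the paper: the tail identity from Lemma~\ref{thm:optrerror}, the Bramble--Hilbert bound \eqref{eq:brambleh} giving the rate $r^{-k/n_1}$ in the mixed seminorm, the spectral identity $\lambda(\alpha)=\widetilde{\lambda}(\alpha)$ to repeat the argument with $\Omega_1$ and $\Omega_2$ interchanged, and the embeddings of $H^k$ into the two mixed spaces to obtain the exponent $-k/\min\{n_1,n_2\}$ with the full seminorm $|f|_{H^k(\Omega_1\times\Omega_2)}$. This is exactly the argument the paper sketches in the paragraphs preceding the theorem.
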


% In our proof, we only exploited the smoothness of the 
% underlying kernel functions $k$ and $\widetilde{k}$, 
% respectively. Their smoothness is determined by either 
% the smoothness in the variable $\boldsymbol{x}$ or the 
% variable $\boldsymbol{y}$ of the function $f(\boldsymbol{x},
% \boldsymbol{y})$ to be approximated, compare \eqref{eq:kernel} 
% and \eqref{eq:tildekernel}, respectively. However, mixed regularity 
% in terms of $f\in H_{\mix}^{k,k}(\Omega_1\times\Omega_2)$ would
% improve the result of Theorem \ref{coro:truncatedSVD} by potentially
% doubling the decay of the truncation error, i.e., one gets the rate 
% $r^{-s/\min\{n_1,n_2\}}$ for some $k < s\le 2k$ instead of the rate
% $r^{-k/\min\{n_1,n_2\}}$ in \eqref{eq:truncation}, cf.~\cite{T1,T2,T3}.%
% \footnote{In case of $n_1 = n_2$, the rate is essentially $r^{-2k/
% \min\{n_1,n_2\}}$, i.e., up to a logarithmic term in $r$ twice the rate 
% from \eqref{eq:truncation} but with a different constant and the norm 
% $|f|_{H_{\mix}^{k,k}(\Omega_1\times\Omega_2)}$. In case of $n_1\not= 
% n_2$, the rate is reduced to $r^{-k(n_1+n_2)/n_1 n_2}$.} Note that mixed 
% regularity on a single domain $\Omega_1$ or $\Omega_2$ would affect 
% the result, too, since then the approximation estimate \eqref{eq:brambleh} 
% would be improved.

\begin{remark}
%========================================
Theorem \ref{coro:truncatedSVD} implies that the eigenvalues
$\{\lambda(\alpha)\}_{\alpha\in\mathbb{N}}$ in case of a function
$f\in H^k(\Omega_1\times \Omega_2)$ decay like
\begin{equation}\label{eq:decay}
%=================================
  \lambda(\alpha)\lesssim\alpha^{-\frac{2k}{\min\{n_1,n_2\}}-1}
  	\quad\text{as $\alpha\to\infty$}.
\end{equation}
\end{remark}

Having the decay rate of the eigenvalues at hand, we are 
able to improve the result of Lemma~\ref{lem:regularity1} 
by sacrificing some regularity. % \footnote{For the present 
% lemma, we exploit only the smoothness in the first variable
% of the function to be approximated, living on the domain 
% $\Omega_1\subset\mathbb{R}^{n_1}$. This is sufficient for our
% purposes since the dimension $n_2$ of the other domain 
% $\Omega_2\subset\mathbb{R}^{n_2}$ will always be equal to 
% or larger than $n_1$ later on in our application.} 
Note that the proof of this result is based upon an 
argument from \cite{S37}.

\begin{lemma}\label{lem:regularity2}
%==================================
Assume that $f\in H^{k+\min\{n_1,n_2\}}(\Omega_1\times\Omega_2)$
for some fixed $k\ge 0$. Then, it holds
\[
  \sum_{\alpha=1}^\infty\lambda(\alpha)\|\varphi(\alpha)\|_{H^k(\Omega_1)}^2
  	=\|f\|_{H_{\mix}^{k,0}(\Omega_1\times\Omega_2)}^2
\]
and
\[
\sum_{\alpha=1}^\infty\lambda(\alpha)\|\psi(\alpha)\|_{H^k(\Omega_2)}^2
  	=\|f\|_{H_{\mix}^{0,k}(\Omega_1\times\Omega_2)}^2.
\]
\end{lemma}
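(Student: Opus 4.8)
The plan is to recast both identities as Hilbert--Schmidt trace identities, one for each partial derivative in the relevant variable, and then to sum over the derivatives. I describe the first identity; the second follows verbatim after interchanging the roles of $\Omega_1$ and $\Omega_2$ and of $\mathcal{S}^\star$ and $\mathcal{S}$. Writing the norms as sums over multi-indices, $\|f\|_{H_{\mix}^{k,0}(\Omega_1\times\Omega_2)}^2=\sum_{|\boldsymbol{a}|\le k}\|D^{\boldsymbol{a}}_{\boldsymbol{x}}f\|_{L^2(\Omega_1\times\Omega_2)}^2$ and $\|\varphi(\alpha)\|_{H^k(\Omega_1)}^2=\sum_{|\boldsymbol{a}|\le k}\|D^{\boldsymbol{a}}\varphi(\alpha)\|_{L^2(\Omega_1)}^2$, it suffices to prove, for every fixed multi-index $\boldsymbol{a}$ with $|\boldsymbol{a}|\le k$, the single identity
\[
\|D^{\boldsymbol{a}}_{\boldsymbol{x}}f\|_{L^2(\Omega_1\times\Omega_2)}^2
=\sum_{\alpha=1}^\infty\lambda(\alpha)\,\|D^{\boldsymbol{a}}\varphi(\alpha)\|_{L^2(\Omega_1)}^2 .
\]
Summation over all $\boldsymbol{a}$ with $|\boldsymbol{a}|\le k$ then yields the assertion, the right-hand side being meaningful since each $\varphi(\alpha)\in H^k(\Omega_1)$ by Lemma~\ref{lem:regularity1}.

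To prove this single identity I would introduce the operator $T_{\boldsymbol{a}}:=D^{\boldsymbol{a}}\circ\mathcal{S}^\star\colon L^2(\Omega_2)\to L^2(\Omega_1)$, which by Lemma~\ref{lem:continuity} is well defined for $|\boldsymbol{a}|\le k$. A Fubini/weak-derivative argument identifies its integral kernel as $D^{\boldsymbol{a}}_{\boldsymbol{x}}f$, so that $T_{\boldsymbol{a}}$ is Hilbert--Schmidt with $\|T_{\boldsymbol{a}}\|_{\mathrm{HS}}=\|D^{\boldsymbol{a}}_{\boldsymbol{x}}f\|_{L^2(\Omega_1\times\Omega_2)}$; this is the left-hand side. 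On the other hand, applying $D^{\boldsymbol{a}}$ to the second relation in \eqref{eq:LREV} gives $T_{\boldsymbol{a}}\psi(\alpha)=D^{\boldsymbol{a}}\big(\mathcal{S}^\star\psi(\alpha)\big)=\sqrt{\lambda(\alpha)}\,D^{\boldsymbol{a}}\varphi(\alpha)$. Since $\{\psi(\alpha)\}_{\alpha}$ is an orthonormal basis of the domain space $L^2(\Omega_2)$, the Hilbert--Schmidt norm may be evaluated against this basis,
\[
\|T_{\boldsymbol{a}}\|_{\mathrm{HS}}^2
=\sum_{\alpha=1}^\infty\|T_{\boldsymbol{a}}\psi(\alpha)\|_{L^2(\Omega_1)}^2
=\sum_{\alpha=1}^\infty\lambda(\alpha)\,\|D^{\boldsymbol{a}}\varphi(\alpha)\|_{L^2(\Omega_1)}^2 ,
\]
which is the right-hand side. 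Comparing the two evaluations of $\|T_{\boldsymbol{a}}\|_{\mathrm{HS}}^2$ closes the argument.

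The crux is the legitimacy of the two interchanges implicit above: differentiating under the integral defining $\mathcal{S}^\star$ (so that $T_{\boldsymbol{a}}$ really has kernel $D^{\boldsymbol{a}}_{\boldsymbol{x}}f$ and obeys $T_{\boldsymbol{a}}\psi(\alpha)=\sqrt{\lambda(\alpha)}D^{\boldsymbol{a}}\varphi(\alpha)$), and summing the Hilbert--Schmidt norm termwise against $\{\psi(\alpha)\}$. In this formulation the first interchange is a routine Fubini computation valid already for $f\in H_{\mix}^{k,0}(\Omega_1\times\Omega_2)$, and the second is merely the definition of the Hilbert--Schmidt norm relative to an orthonormal basis; from this vantage point the hypothesis $f\in H^{k+\min\{n_1,n_2\}}(\Omega_1\times\Omega_2)$ looks stronger than my argument consumes. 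The surplus of $\min\{n_1,n_2\}$ derivatives is, however, precisely what the pointwise route of \cite{S37} requires: applied at regularity $k+\min\{n_1,n_2\}$, Lemma~\ref{lem:regularity1} together with an interpolation estimate and the improved eigenvalue decay of Theorem~\ref{coro:truncatedSVD} (cf.\ \eqref{eq:decay}) guarantees a priori the summability $\sum_{\alpha}\lambda(\alpha)\|\varphi(\alpha)\|_{H^k(\Omega_1)}^2<\infty$, whence the truncated singular value decomposition converges in $H_{\mix}^{k,0}(\Omega_1\times\Omega_2)$ and Parseval's identity pins down the value as the claimed equality. Either way, once the single identity is secured, summing over $|\boldsymbol{a}|\le k$ and repeating the computation with $\mathcal{S}$ in place of $\mathcal{S}^\star$, i.e. expanding in the orthonormal basis $\{\varphi(\alpha)\}$ of $L^2(\Omega_1)$, yields the second identity and completes the proof.
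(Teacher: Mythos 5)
Your proof is correct, but it takes a genuinely different route from the paper's. The paper follows Smithies' pointwise argument: from $f\in H^{k+\min\{n_1,n_2\}}$ it derives the eigenvalue decay \eqref{eq:decay}, combines it via interpolation with Lemma~\ref{lem:regularity1} to obtain the a priori summability $\lambda(\alpha)\|\varphi(\alpha)\|_{H^k(\Omega_1)}^2\lesssim\alpha^{-(2+\delta)}$, then establishes pointwise absolute convergence almost everywhere of the differentiated series (via the auxiliary series $A(\boldsymbol{x})$, $B(\boldsymbol{y})$ and Cauchy--Schwarz), invokes Egorov's theorem to interchange $\partial_{\boldsymbol{x}}^{\boldsymbol\beta}$ with the summation, and only then applies Parseval in the second variable. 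Your Hilbert--Schmidt argument short-circuits all of this: the two evaluations of $\|T_{\boldsymbol{a}}\|_{\mathrm{HS}}$ --- as the $L^2$-norm of the kernel $D^{\boldsymbol{a}}_{\boldsymbol{x}}f$, and as the sum over the orthonormal basis $\{\psi(\alpha)\}$ using \eqref{eq:LREV} --- are both standard facts, and the kernel identification is indeed a routine Fubini/slicing computation resting on the identification of $H_{\mix}^{k,0}(\Omega_1\times\Omega_2)$ with $L^2(\Omega_2;H^k(\Omega_1))$. Your observation about the hypothesis is also correct: your argument consumes only $f\in H_{\mix}^{k,0}$ for the first identity and $f\in H_{\mix}^{0,k}$ for the second, both of which already follow from $f\in H^k(\Omega_1\times\Omega_2)$, so the surplus $\min\{n_1,n_2\}$ derivatives are an artifact of the paper's method rather than a necessity of the statement. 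One small point worth recording: the singular functions $\psi(\alpha)$ with $\lambda(\alpha)>0$ span in general only the closure of the range of $\mathcal{S}$; completing them to an orthonormal basis of $L^2(\Omega_2)$ adds vectors lying in the kernel of $\mathcal{S}^\star$, which are annihilated by $T_{\boldsymbol{a}}$ and hence contribute zero to the Hilbert--Schmidt sum, so your computation is unaffected. What the paper's longer route buys is the almost-everywhere absolute convergence of the termwise differentiated series --- a genuine interchange-of-differentiation-and-summation statement of independent interest; what your route buys is brevity, weaker hypotheses, and complete avoidance of the measure-theoretic machinery.
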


\begin{proof}
Without loss of generality, we assume $n_1\le n_2$. Then,
since $f\in H^{k+n_1}(\Omega_1\times\Omega_2)$, we conclude 
from \eqref{eq:decay} that
\[
  \lambda(\alpha)\lesssim\alpha^{-\frac{2(k+n_1)}{n_1}-1} 
  	\quad\text{as $\alpha\to\infty$}.
\]
where we used that $n_1=\min\{n_1,n_2\}$. Moreover, by 
interpolating between $L^2(\Omega_1)$ and $H^{k+n_1}(\Omega_1)$, 
compare \cite{MCL,STEIN} for example, we find
\[
  \|\varphi(\alpha)\|_{H^k(\Omega_1)}^2
  	\lesssim\lambda(\alpha)^{-\frac{k}{k+n_1}},
\]
that is
\[
  \lambda(\alpha)\|\varphi(\alpha)\|_{H^k(\Omega_1)}^2
  	\lesssim\lambda(\alpha)^{\frac{n_1}{k+n_1}}.
\]
As a consequence, we infer that
\[
  \lambda(\alpha)\|\varphi(\alpha)\|_{H^k(\Omega_1)}^2
  	\lesssim \alpha^{-(\frac{2(k+n_1)}{n_1}+1)\cdot\frac{n_1}{k+n_1}}
  	= \alpha^{-(2+\delta)}
\]
with $\delta = \frac{n_1}{k+n_1}>0$. Therefore, it holds
\[
  \sum_{\alpha=1}^{\infty}\alpha^{(1+\delta')}\lambda(\alpha)\|\varphi(\alpha)\|_{H^k(\Omega_1)}^2 < \infty
\]
for any $\delta'\in(0,\delta)$. Hence, the series
\[
  A(\boldsymbol{x}) := \sum_{\alpha=1}^{\infty}\alpha^{(1+\delta')}\lambda(\alpha)
  	|\partial_{\boldsymbol{x}}^{\boldsymbol\beta}\varphi(\alpha,\boldsymbol{x})|^2
\]
converges for almost all $\boldsymbol{x}\in\Omega_1$, provided 
that $|\boldsymbol{\beta}|\le k$. Likewise, the series
\[
  B(\boldsymbol{y})  := \sum_{\alpha=1}^{\infty}\alpha^{-(1+\delta')}
  	|\psi(\alpha,\boldsymbol{y})|^2
\]
converges for almost all $\boldsymbol{y}\in\Omega_2$.
Thus, the series
\[
  \sum_{\alpha=1}^{\infty}\sqrt{\lambda(\alpha)}|\partial_{\boldsymbol{x}}^{\boldsymbol\beta}
  	\varphi(\alpha,\boldsymbol{x})||\psi(\alpha,\boldsymbol{y})|
		\le\sqrt{A(\boldsymbol{x})}\sqrt{B(\boldsymbol{y})}
\]
converges for almost all $\boldsymbol{x}\in\Omega_1$ and 
$\boldsymbol{y}\in\Omega_2$, provided that $|\boldsymbol{\beta}|\le k$.
Because of Egorov's theorem, the pointwise absolute convergence 
almost everywhere implies uniform convergence. Hence,
we can switch differentiation and summation to get
\begin{align*}
\|f\|_{H_{\mix}^{k,0}(\Omega_1\times\Omega_2)}^2
 &= \sum_{|\boldsymbol\beta|\le k}\Bigg\|\partial_{\boldsymbol{x}}^{\boldsymbol\beta}
 	\sum_{\alpha=1}^{\infty}\sqrt{\lambda(\alpha)}
		\big(\varphi(\alpha)\otimes\psi(\alpha)\big)\Bigg\|_{L^2(\Omega_1\times\Omega_2)}^2\\
 &= \sum_{|\boldsymbol\beta|\le k}\Bigg\|\sum_{\alpha=1}^{\infty}
 	\sqrt{\lambda(\alpha)}\big(\partial_{\boldsymbol{x}}^{\boldsymbol\beta}
  		\varphi(\alpha)\otimes\psi(\alpha)\big)\Bigg\|_{L^2(\Omega_1\times\Omega_2)}^2.
\end{align*}
Finally, we exploit the product structure of $L^2(\Omega_1\times\Omega_2)$ 
and the orthonormality of $\{\psi(\alpha)\}_{\alpha\in\mathbb{N}}$ to derive 
the first assertion, i.e., 
\begin{align*}
\|f\|_{H_{\mix}^{k,0}(\Omega_1\times\Omega_2)}^2
 &= \sum_{\alpha=1}^{\infty}\lambda(\alpha)\sum_{|\boldsymbol\beta|\le k}
 	\big\|\partial_{\boldsymbol{x}}^{\boldsymbol\beta}
  		\varphi(\alpha)\big\|_{L^2(\Omega_1)}^2\|\psi(\alpha)\|_{L^2(\Omega_2)}^2\\
 &= \sum_{\alpha=1}^{\infty}\lambda(\alpha)
 	\|\varphi(\alpha)\|_{H^k(\Omega_1)}^2.
\end{align*}
The second assertion follows in complete analogy.
\end{proof}

% Note that this lemma immediately implies the 
% equalities
% \[
%   \sum_{\alpha=1}^\infty\lambda(\alpha)|\varphi(\alpha)|_{H^k(\Omega_1)}^2
%   	=|f|_{H_{\mix}^{k,0}(\Omega_1\times\Omega_2)}^2
% \]
% and
% \[
% \sum_{\alpha=1}^\infty\lambda(\alpha)|\psi(\alpha)|_{H^k(\Omega_2)}^2
%   	=|f|_{H_{\mix}^{0,k}(\Omega_1\times\Omega_2)}^2,
% \]
% which will be used later.

%===================================================
\subsection{Vector-valued functions}\label{subsec:vectors}
%===================================================
In addition to the aforementioned results, we will also need the
following result which concerns the approximation of vector-valued 
functions. Here and in the sequel, the vector-valued function 
$\boldsymbol{w} = [w(\alpha)]_{\alpha=1}^m$ is an element of 
$[L^2(\Omega)]^m$ and $[H^k(\Omega)]^m$ for some domain 
$\Omega\subset\mathbb{R}^n$, respectively, if the norms
\[
  \|\boldsymbol{w}\|_{[L^2(\Omega)]^m} 
  	= \sqrt{\sum_{\alpha=1}^m \|w(\alpha)\|_{L^2(\Omega)}^2},\quad
  \|\boldsymbol{w}\|_{[H^k(\Omega)]^m} 
  	= \sqrt{\sum_{\alpha=1}^m \|w(\alpha)\|_{H^k(\Omega)}^2}
\]
are finite. Likewise, the seminorm is defined in $[H^k(\Omega)]^m$.

Consider now a vector-valued function $\boldsymbol{w}\in [H^k(\Omega_1)]^m$ 
of dimension $m$. Then, instead of \eqref{eq:brambleh}, we find
\[
  \|(I-P_r)\boldsymbol{w}\|_{[L^2(\Omega_1)]^m}
  	\le c_k \bigg(\frac{r}{m}\bigg)^{-k/n_1}|\boldsymbol{w}|_{[H^k(\Omega_1)]^m},
\]
since $\boldsymbol{w}$ consists of $m$ components and we thus
need $m$-times as many ansatz functions for our approximation
argument. Hence, in case of a vector-valued function $\boldsymbol{f}\in 
[H_{\mix}^{k,0}(\Omega_1\times\Omega_2)]^m\simeq [H^k(\Omega_1)]^m
\otimes L^2(\Omega_2)$, we conclude by exploiting the smoothness in 
the first variable\footnote{Note that the kernel function of 
$\mathcal{S}_f^\star\mathcal{S}_f$ is matrix-valued while the kernel 
function of $\mathcal{S}_f\mathcal{S}_f^\star$ is scalar-valued.} that the truncation 
error of the singular value decomposition can be estimated by
\begin{equation}\label{eq:vectorSVD}
%==================================
  \bigg\|\boldsymbol{f}-\sum_{\alpha=1}^r\sqrt{\lambda(\alpha)}
  	\big(\boldsymbol\varphi(\alpha)\otimes\psi(\alpha)\big)\Bigg\|_{[L^2(\Omega_1\times\Omega_2)]^m}\!\!\!
  \le c_k \bigg(\frac{r}{m}\bigg)^{-k/n_1}|\boldsymbol{f}|_{[H_{\mix}^{k,0}(\Omega_1\times\Omega_2)]^m}.
\end{equation}
Hence, the decay rate of the singular values is considerably reduced.
Finally, we like to remark that Lemma~\ref{lem:regularity2} holds 
also in the vector case, i.e.,
\[
  \sum_{\alpha=1}^\infty\lambda(\alpha)\|\boldsymbol\varphi(\alpha)\|_{[H^k(\Omega_1)]^m}^2
  	= \|\boldsymbol{f}\|_{[H_{\mix}^{k,0}(\Omega_1\times\Omega_2)]^m}^2
\]
and
\begin{equation}\label{eq:vectoresti}
%====================================
  \sum_{\alpha=1}^\infty\lambda(\alpha)\|\psi(\alpha)\|_{H^k(\Omega_2)}^2
  	=\|\boldsymbol{f}\|_{[H_{\mix}^{0,k}(\Omega_1\times\Omega_2)]^m}^2,
\end{equation}
provided that $\boldsymbol{f}$ has extra regularity in terms of 
$\boldsymbol{f}\in [H^{k+n_1}(\Omega_1\times\Omega_2)]^m$.
Here, analogously to above, $[H_{\mix}^{0,k}(\Omega_1\times
\Omega_2)]^m\simeq [L^2(\Omega_1)]^m\otimes H^k(\Omega_2)$.

After these preparations, we now introduce and analyze two types 
of continuous analogues of tensor formats, namely of the Tucker format 
\cite{Hitch,Tucker} and of the tensor train format \cite{MPS,O}, and 
discuss their approximation properties for functions $f\in 
H^k(\Omega_1\times\dots\times\Omega_m)$.

%===================================================
\section{Tucker tensor format}\label{sec:TF}
%===================================================
\subsection{Tucker decompostion}
%===================================================
We shall consider from now on a product domain which consists of 
$m$ different domains $\Omega_j\subset\mathbb{R}^{n_j}$, $j=1,\ldots,m$. 
For given $f\in L^2(\Omega_1\times\dots\times\Omega_m)$ and
$j\in\{1,2,\ldots,m\}$, we apply the singular value decomposition to separate 
the variables $\boldsymbol{x}_j\in\Omega_j$ and $(\boldsymbol{x}_1,\ldots,
\boldsymbol{x}_{j-1},\boldsymbol{x}_{j+1},\ldots,\boldsymbol{x}_m)\in\Omega_1
\times\dots\times\Omega_{j-1}\times\Omega_{j+1}\times\dots\times\Omega_m$. 
We hence get
\begin{equation}\label{eq:tuckerSVD}
%=======================================
\begin{aligned}
  &f(\boldsymbol{x}_1,\ldots,\boldsymbol{x}_{j-1},\boldsymbol{x}_j,\boldsymbol{x}_{j+1},\ldots,\boldsymbol{x}_m)\\
  &\qquad= \sum_{\alpha_j=1}^\infty\sqrt{\lambda_j(\alpha_j)}\varphi_j(\boldsymbol{x}_j,\alpha_j)
		\psi_j(\alpha_j,\boldsymbol{x}_1,\ldots,\boldsymbol{x}_{j-1},\boldsymbol{x}_{j+1},\ldots,\boldsymbol{x}_m),
\end{aligned}
\end{equation}
where the left eigenfunctions $\{\varphi_j(\alpha_j)\}_{\alpha_j\in\mathbb{N}}$
form an orthonormal basis in $L^2(\Omega_j)$. Consequently, if we iterate over all
$j\in\{1,2,\ldots,m\}$, this yields an orthonormal basis $\{\varphi_1(\alpha_1)\otimes
\cdots\otimes\varphi_m(\alpha_m)\}_{\boldsymbol\alpha\in\mathbb{N}^m}$ of 
$L^2(\Omega_1\times\dots\times\Omega_m)$, and we arrive at the representation
\begin{equation}\label{eq:core}
%===============================
  f(\boldsymbol{x}_1,\ldots,\boldsymbol{x}_m) = \sum_{|\boldsymbol\alpha| = 1}^\infty 
  	\omega(\boldsymbol\alpha)\varphi_1(\alpha_1,\boldsymbol{x}_1)\cdots\varphi_m(\alpha_m,\boldsymbol{x}_m).
\end{equation}
Herein, the tensor $\big[\omega(\boldsymbol\alpha)\big]_{\boldsymbol
\alpha\in\mathbb{N}^m}$ is the \emph{core tensor}, where a single coefficient 
is given by
\[
  \omega(\alpha_1,\ldots,\alpha_m) = \int_{\Omega_1\times\dots\times\Omega_m} 
  	f(\boldsymbol{x}_1,\ldots,\boldsymbol{x}_m) \varphi_1(\alpha_1,\boldsymbol{x}_1)
		\cdots\varphi_m(\alpha_m,\boldsymbol{x}_m)\d\,(\boldsymbol{x}_1,\ldots,\boldsymbol{x}_m).
\]

%===================================================
\subsection{Truncation error}
%===================================================
If we intend to truncate the singular value decomposition 
\eqref{eq:tuckerSVD} after $r_j$ terms such that the truncation 
error is bounded by $\varepsilon$, we have to choose 
\begin{equation}\label{eq:ranksTF}
%================================
  \sqrt{\sum_{\alpha_j = r_j+1}^\infty\lambda_j(\alpha_j)}
  	\lesssim r_j^{-k/n_j}|f|_{H^k(\Omega_1\times\dots\times\Omega_m)}
	\overset{!}{\lesssim}\varepsilon
  	\qquad\Longrightarrow\qquad r_j = \varepsilon^{-{n_j}/k}
\end{equation}
according to Theorem \ref{coro:truncatedSVD}. Doing so 
for all $j\in\{1,2,\ldots,m\}$, we obtain the approximation
\[
  f_{r_1,\ldots,r_m}^{\TF}(\boldsymbol{x}_1,\ldots,\boldsymbol{x}_m) 
  	= \sum_{\alpha_1=1}^{r_1}\cdots\sum_{\alpha_m=1}^{r_m}
  		\omega(\alpha_1,\ldots,\alpha_m)\varphi_1(\alpha_1,\boldsymbol{x}_1)
			\cdots\varphi_m(\alpha_m,\boldsymbol{x}_m).
\]
We have the following result on the Tucker decomposition:

\begin{theorem}
Let $f\in H^k(\Omega_1\times\dots\times\Omega_m)$ for 
some fixed $k>0$ and $0<\varepsilon<1$. If the ranks are chosen 
according to $r_j=\varepsilon^{-n_j/k}$ for all $j=1,\dots,m$. Then, 
the truncation error of the truncated Tucker decomposition is
\[
 \big\|f-f_{r_1,\ldots,r_m}^{\TF}\big\|_{L^2(\Omega_1\times\dots\times\Omega_m)}
 \le \sqrt{\sum_{j=1}^m \sum_{\alpha_j = r_j+1}^\infty\lambda_j(\alpha_j)}\lesssim\sqrt{m}\varepsilon,
\]
while the storage cost for the core tensor
of $f_{r_1,\ldots,r_m}^{\TF}$ are $\prod_{j=1}^m r_j = 
\varepsilon^{-(n_1+\dots+n_m)/k}$.
\end{theorem}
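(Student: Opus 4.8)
The plan is to realize the truncated Tucker approximation as an iterated orthogonal projection and to exploit the orthonormality of the eigenfunction bases from \eqref{eq:tuckerSVD}. For each $j$, let $\Pi_j$ denote the $L^2(\Omega_1\times\dots\times\Omega_m)$-orthogonal projection that acts as the identity in all variables except the $j$-th, where it projects onto $\spn\{\varphi_j(1),\dots,\varphi_j(r_j)\}$. Since the $\Pi_j$ act on distinct variables they commute, and because $\{\varphi_1(\alpha_1)\otimes\dots\otimes\varphi_m(\alpha_m)\}$ is an orthonormal basis, the composition $\Pi_1\cdots\Pi_m$ is precisely the orthogonal projection that truncates the core expansion \eqref{eq:core} to $\alpha_j\le r_j$ for all $j$; hence $f_{r_1,\ldots,r_m}^{\TF}=\Pi_1\cdots\Pi_m f$. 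First I would write the error via the telescoping identity
\[
 I-\Pi_1\cdots\Pi_m=\sum_{j=1}^m \Pi_1\cdots\Pi_{j-1}(I-\Pi_j),
\]
so that $f-f_{r_1,\ldots,r_m}^{\TF}=\sum_{j=1}^m T_j$ with $T_j:=\Pi_1\cdots\Pi_{j-1}(I-\Pi_j)f$.

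The crucial observation is that the summands $T_j$ are pairwise orthogonal in $L^2(\Omega_1\times\dots\times\Omega_m)$. Indeed, expanding in the orthonormal basis, $T_j$ only involves basis tensors whose multi-index lies in $\{\alpha_1\le r_1,\dots,\alpha_{j-1}\le r_{j-1},\,\alpha_j>r_j\}$; for $j<j'$ the factor $\Pi_j$ already appears in $\Pi_1\cdots\Pi_{j'-1}$, forcing $\alpha_j\le r_j$ in $T_{j'}$, so the index sets are disjoint. Pythagoras then gives $\|f-f_{r_1,\ldots,r_m}^{\TF}\|^2=\sum_{j=1}^m\|T_j\|^2$. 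Since $\Pi_1\cdots\Pi_{j-1}$ is an orthogonal projection it is norm-nonincreasing, whence $\|T_j\|\le\|(I-\Pi_j)f\|$; and by the singular value decomposition \eqref{eq:tuckerSVD} in the $j$-th direction together with the orthonormality of $\{\psi_j(\alpha_j)\}$, one has $\|(I-\Pi_j)f\|^2=\sum_{\alpha_j=r_j+1}^\infty\lambda_j(\alpha_j)$. Combining these yields the stated bound $\|f-f_{r_1,\ldots,r_m}^{\TF}\|\le\big(\sum_{j=1}^m\sum_{\alpha_j=r_j+1}^\infty\lambda_j(\alpha_j)\big)^{1/2}$.

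Finally I would invoke Theorem \ref{coro:truncatedSVD} in the form \eqref{eq:ranksTF} in each direction $j$: exploiting the smoothness of $f$ in $\boldsymbol{x}_j$ gives $\sqrt{\sum_{\alpha_j>r_j}\lambda_j(\alpha_j)}\lesssim r_j^{-k/n_j}|f|_{H^k(\Omega_1\times\dots\times\Omega_m)}$, so that the choice $r_j=\varepsilon^{-n_j/k}$ makes each of the $m$ inner sums $\lesssim\varepsilon^2$; summing over $j$ produces the factor $\sqrt{m}$, i.e.\ $\|f-f_{r_1,\ldots,r_m}^{\TF}\|\lesssim\sqrt{m}\,\varepsilon$. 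The cost claim is then immediate from $\prod_{j=1}^m r_j=\prod_{j=1}^m\varepsilon^{-n_j/k}=\varepsilon^{-(n_1+\dots+n_m)/k}$. The only genuinely delicate point is the orthogonality of the error components $T_j$: without it the triangle inequality would only give $\|f-f_{r_1,\ldots,r_m}^{\TF}\|\lesssim m\varepsilon$, and it is precisely the Pythagorean cancellation---valid here because all projections share the same eigenbases and commute---that sharpens the dependence on $m$ from $m$ to $\sqrt{m}$.
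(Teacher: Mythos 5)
Your proposal is correct and follows essentially the same route as the paper's own proof: your telescoping terms $T_j=\Pi_1\cdots\Pi_{j-1}(I-\Pi_j)f$ are exactly the paper's differences $f_{r_1,\dots,r_{j-1},\infty,\dots,\infty}^{\TF}-f_{r_1,\dots,r_j,\infty,\dots,\infty}^{\TF}$, and both arguments combine Pythagoras for these mutually orthogonal pieces, the norm-nonincreasing projection bound $\|T_j\|\le\|(I-\Pi_j)f\|$, and the per-direction truncation estimate of Theorem \ref{coro:truncatedSVD} with $r_j=\varepsilon^{-n_j/k}$. Your write-up is merely more explicit than the paper's (verifying the disjointness of the index sets behind the orthogonality claim), which the paper compresses into ``due to orthonormality.''
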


\begin{proof}
For the approximation of the core tensor, the sets of the univariate 
eigenfunctions $\{\varphi_j(\alpha_j)\}_{\alpha_j=1}^{r_j}$ are used 
for all $j=1,\ldots,m$, cf.~\eqref{eq:core}. Due to orthonormality, we find
\[
  \big\|f-f_{r_1,\ldots,r_m}^{\TF}\big\|_{L^2(\Omega_1\times\dots\times\Omega_m)}^2
  	= \sum_{j=1}^m \big\|f_{r_1,\dots,r_{j-1},\infty,\dots,\infty}^{\TF}
  	-f_{r_1,\dots,r_j,\infty,\dots,\infty}^{\TF}\big\|_{L^2(\Omega_1\times\dots\times\Omega_m)}^2,
\]
where we obtain $f_{\infty,\ldots,\infty}^{\TF} = f$ in 
case of $j=1$. Since
\begin{align*}
  &\big\|f_{r_1,\dots,r_{j-1},\infty,\dots,\infty}^{\TF}
  	-f_{r_1,\dots,r_j,\infty,\dots,\infty}^{\TF}\big\|_{L^2(\Omega_1\times\dots\times\Omega_m)}^2\\
 &\qquad\qquad\le \big\|f_{\infty,\dots,\infty}^{\TF}
  	-f_{\infty,\dots,\infty,r_j,\infty,\dots,\infty}^{\TF}\big\|_{L^2(\Omega_1\times\dots\times\Omega_m)}^2
  = \sum_{\alpha_j = r_j+1}^\infty\lambda_j(\alpha_j)
\end{align*}
for all $j\in\{1,2,\ldots,m\}$, we arrive with \eqref{eq:ranksTF}
and the summation over $j = 1,\ldots,m$ at the desired error estimate.
% This completes the proof, since the estimate on the accuracy 
% follows now from \eqref{eq:ranksTF} while 
This completes the proof, since the estimate on the 
number of coefficients in the core tensor is obvious.
\end{proof}

%===================================================
\subsection{Sobolev spaces with dimension weights}\label{sct:Tucker:infty}
%===================================================
The cost of the core tensor of the Tucker decomposition exhibit 
the curse of dimension as the number $m$ of subdomains increases.
This can be seen most simply for the example $n_j=n$. Then, the 
cost are $\varepsilon^{-nm/k}$, which expresses the curse of dimension 
as long as $k$ is not proportional to $m$. Nonetheless, in case of 
Sobolev spaces with dimension weights, the curse of dimension can be beaten. 

For $f\in H^{k+n}(\Omega^m)$, we shall discuss the situation $m\to\infty$ 
in more detail. To this end, we assume that all subdomains are identical 
to a single domain $\Omega\subset\mathbb{R}^n$ of dimension $n$
and note that the limit $m\to\infty$ only makes sense when weights 
are included in the underlying Sobolev spaces which ensure 
that higher dimensions become less important. For our proofs
we choose as usual $m$ arbitrary but fixed and show the existence of 
$m$-independent constants in the convergence and cost estimates.

The Sobolev spaces $H_{\boldsymbol\gamma}^k(\Omega^m)$ 
with dimension weights $\boldsymbol\gamma\in\mathbb{R}^m$ we consider are 
given by all functions $f\in H^k(\Omega^m)$ such that
\begin{equation}\label{eq:weights}
%====================================
  \bigg\|\frac{\partial^k f}{\partial\boldsymbol{x}_j^{\boldsymbol\beta}}\bigg\|_{L^2(\Omega^m)}
	\lesssim\gamma_j^k\|f\|_{H^k(\Omega^m)}\ \text{for all}\ |\boldsymbol\beta| = k\ 
		\text{and}\ j=1,2,\ldots,m.
\end{equation}
The definition in \eqref{eq:weights} means that, given 
a function $f$ with norm $\|f\|_{H^k(\Omega^m)}<\infty$, 
the partial derivatives with respect to $\boldsymbol{x}_j$ become
less important as the dimension $j$ increases. Such functions
appear for example in uncertainty quantification. Let be given a 
Karhunen-Lo\`eve expansion 
\[
  u({\bf x},{\bf y}) = \sum_{j = 1}^m \sigma_j \varphi_j({\bf x}) y_j,
  \quad y_j\in [-\nicefrac{1}{2},\nicefrac{1}{2}],
\]
and insert it into a function $b:\mathbb{R}\to\mathbb{R}$
of \emph{finite} smoothness $W^{k,\infty}(\mathbb{R})$. 
Then, the function $b\big(u({\bf x},{\bf y})\big)$ satisfies 
\eqref{eq:weights} with respect to the ${\bf y}$-variable, 
where $\gamma_j = \sigma_j$. Hence, the solution of 
a given partial differential equation would satisfy a 
decay estimate similar to \eqref{eq:weights} whenever 
the stochastic field enters the partial differential equation 
through a non-smooth coefficient function $b$,
compare \cite{GP18,HPS16,HS20} for example.

It turns out that algebraically decaying weights 
\eqref{eq:weightsTF} are sufficient to beat the curse of 
dimension in case of the Tucker tensor decomposition.%
\footnote{In Theorem \ref{thm:weightedTF}, no truncation 
of the dimension is applied, as it would be required in practice 
if the number $m$ of domains tends to infinity. Note that the 
dimension truncation is indeed here the same as for the tensor 
train decomposition later on, see also Theorem~\ref{thm:wheightTT}.}

\begin{theorem}\label{thm:weightedTF}
%===================================
Given $\delta > 0$, let $f\in H_{\boldsymbol\gamma}^k(\Omega^m)$ 
for some fixed $k>0$ with weights \eqref{eq:weights} 
that decay like
\begin{equation}\label{eq:weightsTF}
%===================================
  \gamma_j\lesssim j^{-(1+\delta')/k}\ \text{for some $\delta'>\delta+\frac{k}{n}$}.
\end{equation}
Then, for all $0<\varepsilon<1$, the error of the continuous 
Tucker decomposition with ranks
\begin{equation}\label{eq:ranksTFweighted}
%===================================
  r_j = \big\lceil\gamma_j^n j^{(1+\delta)n/k}\varepsilon^{-n/k}\big\rceil
\end{equation}
is of order $\varepsilon$ while the storage cost for the 
core tensor of $f_{r_1,\ldots,r_m}^{\TF}$ are bounded by
$\varepsilon^{-n/k}$ independent of the dimension $m$.
\end{theorem}

\begin{proof}
In view of Theorem~\ref{coro:truncatedSVD} and \eqref{eq:weights}, 
we deduce by choosing the ranks as in \eqref{eq:ranksTF} that
\[
  \sqrt{\sum_{\alpha_j = r_j+1}^\infty\lambda_j(\alpha_j)}
  	\lesssim r_j^{-k/n} \gamma_j^k \|f\|_{H^k(\Omega^m)}
		\lesssim\frac{\varepsilon}{j^{1+\delta}}.
\]
Therefore, we reach the desired over-all truncation error 
\begin{equation}\label{eq:over-error}
%====================================
  \sum_{j=1}^m \frac{\varepsilon}{j^{1+\delta}} \lesssim \varepsilon\ \text{as $m\to\infty$}.
\end{equation}
When the weights $\gamma_j$ decay as in \eqref{eq:ranksTFweighted}, 
then the cost of the core tensor are
\[
  C := \prod_{j=1}^m r_j \le\prod_{j=1}^m \big(1+\gamma_j^n j^{(1+\delta)n/k}\varepsilon^{-n/k}\big)
  % \lesssim r_1^m \prod_{j=1}^m j^{-(1+\delta')n/k} j^{(1+\delta)n/k}
  \lesssim\prod_{j=1}^m \big(1+j^{-\theta}\varepsilon^{-n/k}\big)
\]
with $\theta = (\delta'-\delta)n/k>1$. Hence, the cost of the 
core tensor stay bounded independently of $m$ since
\[
  \log C\lesssim\sum_{j=1}^m \log\big(1+j^{-\theta}\varepsilon^{-n/k}\big)
  \le \varepsilon^{-n/k}\sum_{j=1}^m j^{-\theta}\lesssim\varepsilon^{-n/k}
  	\ \text{as $m\to\infty$}.
\]
\end{proof}

%===================================================
\section{Tensor train format}\label{sec:TT}
%===================================================
\subsection{Tensor train decomposition}
%===================================================
For the discussion of the continuous tensor train decomposition, we 
should assume that the domains $\Omega_j\subset\mathbb{R}^{n_j}$, 
$j=1,\ldots,m$, are arranged in such a way that it holds $n_1\le\dots\le 
n_m$.\footnote{The 
considerations in this section are based upon \cite{M16}. Nonetheless, 
the results derived there are not correct. The authors did not consider 
the impact of the vector-valued singular value decomposition in a proper
way, which indeed does result in the curse of dimension.}

Now, consider $f\in H^k(\Omega_1\times\dots\times\Omega_m)$ and 
separate the variables $\boldsymbol{x}_1\in\Omega_1$ and 
$(\boldsymbol{x}_2,\ldots,\boldsymbol{x}_m)\in\Omega_2\times\dots
\times\Omega_m$ by the singular value decomposition
\[
  f(\boldsymbol{x}_1,\boldsymbol{x}_2,\ldots,\boldsymbol{x}_n)
  	= \sum_{\alpha_1=1}^\infty\sqrt{\lambda_1(\alpha_1)}\varphi_1(\boldsymbol{x}_1,\alpha_1)
		\psi_1(\alpha_1,\boldsymbol{x}_2,\ldots,\boldsymbol{x}_m).
\]
Since
\[
   \bigg[\sqrt{\lambda_1(\alpha_1)}\psi_1(\alpha_1)\bigg]_{\alpha_1=1}^\infty
   \in \ell^2(\mathbb{N})\otimes L^2(\Omega_2\times\dots\times\Omega_m),
\]
we can separate $(\alpha_1,\boldsymbol{x}_2)\in\mathbb{N}
\times\Omega_2$ from $(\boldsymbol{x}_3,\ldots,\boldsymbol{x}_m)\in
\Omega_3\times\dots\times\Omega_m$ by means of a second singular 
value decomposition and arrive at
\begin{equation}\label{eq:tt-step}
%===================================
\begin{aligned}
 &\bigg[\sqrt{\lambda_1(\alpha_1)}\psi_1(\alpha_1,\boldsymbol{x}_2,
 		\ldots,\boldsymbol{x}_m)\bigg]_{\alpha_1=1}^\infty\\
  	&\qquad\qquad= \sum_{\alpha_2=1}^\infty\sqrt{\lambda_2(\alpha_2)}
		\bigg[\varphi_2(\alpha_1,\boldsymbol{x}_2,\alpha_2)\bigg]_{\alpha_1=1}^\infty
			\psi_2(\alpha_2,\boldsymbol{x}_3,\ldots,\boldsymbol{x}_m).
\end{aligned}
\end{equation}
By repeating the last step and successively separating
$(\alpha_{j-1},\boldsymbol{x}_j)\in\mathbb{N}\times\Omega_j$ from 
$(\boldsymbol{x}_{j+1},\ldots,\boldsymbol{x}_m)\in\Omega_{j+1}
\times\dots\times\Omega_m$ for $j=3,\ldots,m-1$ we finally 
arrive at the representation
\begin{align*}
  f(\boldsymbol{x}_1,\ldots,\boldsymbol{x}_m) &= \sum_{\alpha_1=1}^\infty\cdots\sum_{\alpha_{m-1}=1}^\infty
  	\varphi_1(\alpha_1,\boldsymbol{x}_1)\varphi_2(\alpha_1,\boldsymbol{x}_2,\alpha_2)\\
  &\hspace*{2.5cm}\cdots\varphi_{m-1}(\alpha_{m-2},\boldsymbol{x}_{m-1},\alpha_{m-1})
	\varphi_m(\alpha_{m-1},\boldsymbol{x}_m),
\end{align*}
where 
\[
  \varphi_m(\alpha_{m-1},\boldsymbol{x}_m) 
  	= \sqrt{\lambda_{m-1}(\alpha_{m-1})}\psi_{m-1}(\alpha_{m-1},\boldsymbol{x}_m).
\]
In contrast to the Tucker format, we do not obtain a huge core tensor
since each of the $m-1$ singular value decompositions of the tensor 
train decomposition removes the actual first spatial domain from the
approximant. We just obtain a product of \emph{matrix\/}-valued functions
(except for the first and last factor which are vector-valued functions),
each of which is related with a specific domain $\Omega_j$. This 
especially results in only $m-1$ sums in contrast to the 
$m$ sums for the Tucker format.

%===================================================
\subsection{Truncation error}
%===================================================
In practice, we truncate the singular value decomposition in step $j$ 
after $r_j$ terms, thus arriving at the representation
\begin{align*}
  f_{r_1,\ldots,r_{m-1}}^{\TT}(\boldsymbol{x}_1,\ldots,\boldsymbol{x}_m)
  	&= \sum_{\alpha_1=1}^{r_1}\cdots\sum_{\alpha_{m-1}=1}^{r_{m-1}}\varphi_1(\alpha_1,\boldsymbol{x}_1)
		\varphi_2(\alpha_1,\boldsymbol{x}_2,\alpha_2)\\&\hspace*{3cm}\cdots
		\varphi_{m-1}(\alpha_{m-2},\boldsymbol{x}_{m-1},\alpha_{m-1})
			\varphi_m(\alpha_{m-1},\boldsymbol{x}_m).
\end{align*}
One readily infers by using again Pythogoras' theorem that
the truncation error is bounded by
\[
  \|f-f_{r_1,\ldots,r_{m-1}}^{\TT}\|_{L^2(\Omega_1\times\dots\times\Omega_m)}
  	\le\sqrt{\sum_{j=1}^{m-1}\sum_{\alpha_j=r_j+1}^\infty \lambda_j(\alpha_j)},
\]
see also \cite{OT}. Note that, for $j\ge 2$, the singular 
values $\{\lambda_j(\alpha)\}_{\alpha\in\mathbb{N}}$ in this 
estimate do not coincide with the singular values from the 
original continuous tensor train decomposition due to
the truncation.

We next shall give bounds on the truncation error. In the 
$j$-th step of the algorithm, $j=2,3,\ldots,m-1$, one needs to 
approximate the vector-valued function
\[
  \boldsymbol{g}_j(\boldsymbol{x}_j,\ldots,\boldsymbol{x}_m) 
  	:= \bigg[\sqrt{\lambda_{j-1}(\alpha_{j-1})}
		\psi_{j-1}(\alpha_{j-1},\boldsymbol{x}_j,
			\ldots,\boldsymbol{x}_m)\bigg]_{\alpha_{j-1}=1}^{r_{j-1}}
\]
by a vector-valued singular value decomposition.
This means that we consider the singular value decomposition
\eqref{eq:vectorSVD} for a vector-valued function in case of the
domains $\Omega_j$ and $\Omega_{j+1}\times\dots\times\Omega_m$.

For $f\in H^{k+n_{m-1}}(\Omega_1\times\dots\times\Omega_m)$, it holds 
$\boldsymbol{g}_2\in [H^{k+n_{m-1}}(\Omega_2\times\dots\times\Omega_m)]^{r_1}$ 
and 
\[
  |\boldsymbol{g}_2|_{[H^k(\Omega_2\times\dots\times\Omega_m)]^{r_1}}
%  = \sqrt{\sum_{\alpha_1=1}^{r_1}\lambda_1(\alpha_1)
%		|\psi_1(\alpha_1)|_{H^k(\Omega_2\times\dots\times\Omega_m)}^2}
  \le\sqrt{\sum_{\alpha_1=1}^\infty\lambda_1(\alpha_1)
		|\psi_1(\alpha_1)|_{H^k(\Omega_2\times\dots\times\Omega_m)}^2}
  	\le|f|_{H^k(\Omega_1\times\dots\times\Omega_m)}
\]
according to Lemma~\ref{lem:regularity2}, precisely in its 
vectorized version \eqref{eq:vectoresti}. It follows $\boldsymbol{g}_3
\in [H^{k+n_{m-1}}(\Omega_3\times\dots\times\Omega_m)]^{r_2}$ and, 
again by \eqref{eq:vectoresti},
\[
  |\boldsymbol{g}_3|_{[H^k(\Omega_3\times\dots\times\Omega_m)]^{r_2}}
  	\le\sqrt{\sum_{\alpha_2=1}^\infty\lambda_2(\alpha_2)
		|\psi_2(\alpha_2)|_{H^k(\Omega_3\times\dots\times\Omega_m)}^2}
  	\le|\boldsymbol{g}_2|_{[H^k(\Omega_2\times\dots\times\Omega_m)]^{r_1}}.
\]
We hence conclude recursively $\boldsymbol{g}_j\in 
[H^{k+n_{m-1}}(\Omega_j\times\dots\times\Omega_m)]^{r_{j-1}}$ and 
\begin{equation}\label{eq:boundedness}
%========================================
  |\boldsymbol{g}_j|_{[H^k(\Omega_j\times\dots\times\Omega_m)]^{r_{j-1}}}
  	\le|f|_{H^k(\Omega_1\times\dots\times\Omega_m)}\ \text{for all}\ j=2,3,\ldots,m-1.
\end{equation}

Estimate \eqref{eq:boundedness} shows that the $H^k$-seminorm 
of the vector-valued functions $\boldsymbol{g}_j$ stays bounded by 
$|f|_{H^k(\Omega_1\times\dots\times\Omega_m)}$. But according to 
\eqref{eq:vectorSVD}, we have in the $j$-th step only the truncation error
estimate
\begin{align*}
  &\Bigg\|\boldsymbol{g}_j-\sum_{\alpha_j=1}^{r_j}\sqrt{\lambda_j(\alpha_j)}
  	\big(\boldsymbol\varphi_j(\alpha_j)\otimes\psi_j(\alpha_j)\big)
		\Bigg\|_{[L^2(\Omega_j\times\dots\times\Omega_m)]^{r_{j-1}}}\\
			&\hspace*{5cm}\lesssim \bigg(\frac{r_j}{r_{j-1}}\bigg)^{-k/n_j}
				|\boldsymbol{g}_j|_{[H^k(\Omega_j\times\dots\times\Omega_m)]^{r_{j-1}}}.
\end{align*}
Hence, in view of \eqref{eq:boundedness}, to achieve the target 
accuracy $\varepsilon$ per truncation, the truncation ranks need 
to be increased in accordance with
\begin{equation}\label{eq:ranksTT}
%==================================
  r_1 = \varepsilon^{-n_1/k},\quad r_2 = \varepsilon^{-(n_1+n_2)/k},\quad 
  	\ldots, \quad r_{m-1} = \varepsilon^{-(n_1+\dots+n_{m-1})/k}.
\end{equation}

We summarize our findings in the following theorem, which
holds in this form also if the subdomains $\Omega_j\subset
\mathbb{R}_{n_j}$ are not ordered in such a way that $n_1\le\dots\le n_m$.

\begin{theorem}
Let $f\in H^{k+\max\{n_1,\ldots,n_m\}}(\Omega_1\times\dots\times\Omega_m)$
for some fixed $k>0$ and $0<\varepsilon<1$. Then, the over-all truncation error 
of the tensor train decomposition with truncation ranks \eqref{eq:ranksTT} is
\[
  \|f-f_{r_1,\ldots,r_{m-1}}^{\TT}\|_{L^2(\Omega_1\times\dots\times\Omega_m)}
  	\lesssim\sqrt{m}\varepsilon.
\]
The storage cost for $f_{r_1,\ldots,r_{m-1}}^{\TT}$
are given by
\begin{equation}\label{eq:costTT}
%==================================
  r_1 + \sum_{j=2}^{m-1} {r_{j-1} r_j}
  	= \varepsilon^{-n_1/k} + \varepsilon^{-(2n_1+n_2)/k} 
		+\dots+ \varepsilon^{-(2n_1+\dots+2n_{m-2}+n_{m-1})/k}
\end{equation}
and hence are bounded by $\mathcal{O}(\varepsilon^{-(2m-1)
\max\{n_1,\ldots,n_{m-1}\}/k})$.
\end{theorem}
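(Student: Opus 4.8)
The plan is to assemble the per-step estimates already developed above into the two assertions, treating the error bound and the cost bound separately.

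For the error, I would start from the Pythagoras-type bound
\[
  \|f-f_{r_1,\ldots,r_{m-1}}^{\TT}\|_{L^2(\Omega_1\times\dots\times\Omega_m)}^2
  	\le\sum_{j=1}^{m-1}\sum_{\alpha_j=r_j+1}^\infty \lambda_j(\alpha_j)
\]
recorded earlier, and show that each inner sum is $\lesssim\varepsilon^2$. For $j=1$ this is Theorem~\ref{coro:truncatedSVD} applied directly to $f$, which fixes $r_1=\varepsilon^{-n_1/k}$. For $j\ge 2$ I would feed the vector-valued truncation estimate \eqref{eq:vectorSVD} for $\boldsymbol{g}_j$ together with the uniform seminorm bound \eqref{eq:boundedness}, obtaining
\[
  \sqrt{\sum_{\alpha_j=r_j+1}^\infty\lambda_j(\alpha_j)}
  	\lesssim\bigg(\frac{r_j}{r_{j-1}}\bigg)^{-k/n_j}
		|f|_{H^k(\Omega_1\times\dots\times\Omega_m)}.
\]
Demanding that the right-hand side be $\lesssim\varepsilon$ forces $r_j\gtrsim r_{j-1}\,\varepsilon^{-n_j/k}$, and unwinding this recursion from $r_1$ reproduces precisely the ranks \eqref{eq:ranksTT}. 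Summing the $m-1$ contributions, each of size $\varepsilon^2$, then yields $\|f-f^{\TT}_{r_1,\ldots,r_{m-1}}\|_{L^2}\lesssim\sqrt{m}\,\varepsilon$.

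For the cost, I would substitute \eqref{eq:ranksTT} into the storage count $r_1+\sum_{j=2}^{m-1}r_{j-1}r_j$. A single product becomes
\[
  r_{j-1}r_j=\varepsilon^{-(2n_1+\dots+2n_{j-1}+n_j)/k},
\]
which reproduces the explicit formula \eqref{eq:costTT}. Since $\varepsilon<1$, the exponents grow with $j$, so the last term $r_{m-2}r_{m-1}$ dominates; replacing every $n_i$ in its exponent by $\max\{n_1,\ldots,n_{m-1}\}$ then yields a bound of order $\varepsilon^{-(2m-1)\max\{n_1,\ldots,n_{m-1}\}/k}$ (generously counting the exponents), which is the claimed $\mathcal{O}$-estimate.

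The point I would be most careful about is the regularity bookkeeping that keeps \eqref{eq:boundedness} legitimate at every step and that dictates the hypothesis $f\in H^{k+\max\{n_1,\ldots,n_m\}}$. At the $j$-th separation, the seminorm transfer \eqref{eq:vectoresti} requires a surplus of $n_j$ derivatives in the variable $\boldsymbol{x}_j$ being split off, while the truncation to $r_{j-1}$ terms, together with the regularity inheritance of the right singular functions, is what leaves $\boldsymbol{g}_{j+1}$ with enough smoothness to serve as the input of the next step. Since the split-off dimension is at most $\max\{n_1,\ldots,n_m\}$, this worst case fixes the global smoothness requirement and is exactly why the ordering $n_1\le\dots\le n_m$ may be dropped: one simply uses the uniform surplus $\max\{n_1,\ldots,n_m\}$ in place of the step-dependent $n_j$, leaving both estimates unchanged. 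The only genuine subtlety---already flagged in the footnote---is that the factor $r_{j-1}$ in $(r_j/r_{j-1})^{-k/n_j}$ must not be discarded; it is this deterioration of the singular-value decay under the \emph{vector-valued} decomposition that forces the ranks to grow multiplicatively and produces the exponential-in-$m$ cost.
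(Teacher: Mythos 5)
Your proposal is correct and follows essentially the same route as the paper, whose ``proof'' of this theorem is precisely the preceding derivation: the Pythagoras-type bound, the vector-valued truncation estimate \eqref{eq:vectorSVD} combined with the uniform seminorm bound \eqref{eq:boundedness} to force the multiplicative rank recursion $r_j\sim r_{j-1}\varepsilon^{-n_j/k}$, and direct substitution of \eqref{eq:ranksTT} into the storage count. Your closing remarks on the regularity bookkeeping and on not discarding the factor $r_{j-1}$ match the paper's own emphasis (including its footnote correcting \cite{M16}).
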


\begin{remark}
If $n:=n_1=\dots=n_m$, then the cost of the tensor train decomposition 
are $\mathcal{O}(\varepsilon^{-(2m-1)n/k})$. Thus, the cost are quadratic 
compared to the cost of the Tucker decomposition. However, in practice, 
one performs $m/2$ forward steps and $m/2$ backward steps. This 
means one computes $m/2$ steps as described above to successively 
separate $\boldsymbol{x}_1,\boldsymbol{x}_2,\ldots,\boldsymbol{x}_{m/2}$ from 
the other variables. Then, one performs the algorithm in the opposite direction, 
i.e., one successively separates $\boldsymbol{x}_{m},\boldsymbol{x}_{m-1},
\ldots,\boldsymbol{x}_{m/2+1}$ from the other variables. This way, the over-all 
cost are reduced to the order $\mathcal{O}(\varepsilon^{-mn/k})$.\footnote{If 
the spatial dimensions $n_j$, $j=1,\ldots,m$, of the subdomains are 
different, one can balance the number of forward and backward 
steps in a better way to reduce the cost further.}
\end{remark}

%===================================================
\subsection{Sobolev spaces with dimension weights}
%===================================================
Like for the Tucker decomposition, the cost of the tensor 
train decomposition suffer from the curse of dimension as 
the number $m$ of subdomains increases. We therefore
discuss again appropriately Sobelev spaces with dimension weights,  
where we assume for reasons of simplicity that all subdomains are 
identical to a single domain $\Omega\subset\mathbb{R}^n$
of dimension $n$.

\begin{theorem}\label{thm:wheightTT}
%====================================
Given $\delta > 0$, let $f\in H_{\boldsymbol\gamma}^{k+n}(\Omega^m)$ 
for some fixed $k>0$ with weights \eqref{eq:weights} that decay 
like \eqref{eq:weightsTF}. For $0<\varepsilon<1$, choose 
the ranks successively in accordance with
\begin{equation}\label{eq:ranksTTweighted}
%====================================
  r_j = \big\lceil r_{j-1}\gamma_j^n j^{(1+\delta)n/k}\varepsilon^{-n/k}\big\rceil
\end{equation}
if $j\le M$ and $r_j = 0$ if $j>M$. Here, $M$ is given by
\begin{equation}\label{eq:truncTT}
%====================================
  M = \varepsilon^{-1/(1+\delta')}.
\end{equation}
Then, the error of the continuous tensor train decomposition is 
of order $\varepsilon$ while the storage cost of $f_{r_1,\ldots,r_m}^{\TT}$ 
stay bounded by $M\exp(\varepsilon^{-n/k})^2$ independent of the 
dimension $m$.
\end{theorem}

\begin{proof}
The combination of Theorem~\ref{coro:truncatedSVD}, \eqref{eq:weights} 
and \eqref{eq:ranksTTweighted} implies 
\[
  \sqrt{\sum_{\alpha_j = r_j+1}^\infty\lambda_j(\alpha_j)}
  	\lesssim \bigg(\frac{r_j}{r_{j-1}}\bigg)^{-k/n} \gamma_j^k 
		\|f\|_{H^k(\Omega^m)}\lesssim\frac{\varepsilon}{j^{1+\delta}},
			\quad j=1,2,\ldots,M,
\]
and 
\[
  \sqrt{\sum_{\alpha_{M+1} = 1}^\infty\lambda_{M+1}(\alpha_{M+1})}\lesssim 
  	\gamma_{M+1}^k\|f\|_{H^k(\Omega^m)}\lesssim\varepsilon\|f\|_{H^k(\Omega^m)}.
\]
Hence, as in the proof of Theorem \ref{thm:weightedTF}, the 
approximation error of the continuous tensor train decomposition is
bounded by a multiple of $\varepsilon$ independent of $m$.

Next, we observe for all $j\le M$ that
\[
  r_j \lesssim \big\lceil r_{j-1}\gamma_j^n j^{(1+\delta)n/k}\varepsilon^{-n/k}\big\rceil
  	\lesssim r_{j-1}\gamma_j^n j^{(1+\delta)n/k}\varepsilon^{-n/k}+1.
\]
This recursively yields
\begin{align*}
  r_j-1&\lesssim \sum_{p=1}^j \prod_{q=p}^j \gamma_q^n q^{(1+\delta)n/k}\varepsilon^{-n/k}\\
  	&= \sum_{p=1}^j\varepsilon^{(p-j-1)n/k}\prod_{q=p}^j q^{-\theta}\\
	&= \sum_{p=1}^j \varepsilon^{(p-j-1)n/k}\bigg(\frac{(p-1)!}{j!}\bigg)^\theta\\
	&= \sum_{p=1}^j \varepsilon^{-p n/k}\bigg(\frac{(j-p)!}{j!}\bigg)^\theta.
\end{align*}
Hence, by using that $\theta = (\delta'-\delta)n/k>1$, we obtain
\[
 r_j\lesssim \sum_{p=0}^j \varepsilon^{-p n/k}\frac{(j-p)!}{j!}
 	\le\sum_{p=0}^j \frac{\varepsilon^{-p n/k}}{p!}\le\exp(\varepsilon^{-n/k}).
\]
Therefore, the cost \eqref{eq:costTT} are 
\[
  r_1 + \sum_{j=2}^M r_{j-1} r_j\le\sum_{j=1}^M r_j^2 
  \lesssim M \exp(\varepsilon^{-n/k})^2
\]
and, hence, are bounded independently of $m$ in view of \eqref{eq:truncTT}.
\end{proof}

%=========================================================================
\section{Discussion and conclusion}\label{sec:conrem}
%=========================================================================
In the present article, we considered the continuous 
versions of the Tucker tensor format and of the tensor train 
format for the approximation of functions which live on an
$m$-fold product of arbitrary subdomains. By considering
(isotropic) Sobolev smoothness, we derived estimates on 
the ranks to be chosen in order to realize a prescribed target 
accuracy. These estimates exhibit the curse of dimension.

Both tensor formats have in common that 
always only the variable with respect to a single domain is 
separated from the other variables by means of the singular 
value decomposition. This enables cheaper storage schemes, 
while the influence of the over-all dimension of the product 
domain is reduced to a minimum. % As a consequence, mixed 
% (or anisotropic) Sobolev regularity between the different 
% subdomains does not improve the complexity estimates very 
% much. This is in contrast to a sparse grid approximation \cite{BG}, 
% which would be able to considerably reduce the curse of dimension 
% under this assumption.

We also examined the situation of Sobolev spaces with dimension weights.
Having sufficiently fast decaying weights helps to beat the curse 
of dimension as the number of subdomains tends to infinity. It turned 
out that algebraically decaying weights are appropriate for both, the 
Tucker tensor format and the tensor train format.

% In this article, we did not consider general hierarchical tensor formats 
% like hierarchical trees as introduced in \cite{HaKu}. The reasons for this 
% are twofold. First, in case of isotropic Sobolev smoothness, already the 
% first singular value decomposition, which is to be performed at the root 
% level of the underlying tree, will suffer from the curse of dimension. 
% Moreover, weighted Sobolev spaces do not help to avoid this issue. 
% Second, for hierarchical tensor formats, mixed Sobolev regularity
% will help a lot and should hence be considered, too. This, however,
% is out of the scope of the present article. We refer the reader to \cite{SU}, 
% where the cost complexity of hierarchical tensor formats 
% has been discussed in case of periodic functions.

We finally remark that we considered here only the ranks of the
tensor decomposition in the \emph{continuous\/} case, i.e., for
functions and not for tensors of discrete data. Of course, an 
additional projection step onto suitable finite dimensional trial 
spaces on the individual domains would be necessary to arrive 
at a fully discrete approximation scheme that can really be 
used in computer simulations. This would impose a further error 
of discretization type which needs to be balanced with the 
truncation error of the particular continuous tensor format.

\subsection*{Acknowledgement}
Michael Griebel was partially supported by the Sonder\-for\-schungs\-bereich 
1060 {\em The Mathematics of Emergent Effects} funded by the 
Deutsche For\-schungs\-gemeinschaft. Both authors like to thank 
Reinhold Schneider (Technische Universit\"at Berlin) very much 
for fruitful discussions about tensor approximation.

%=========================================================================

\end{document}